\renewcommand{\ALG@beginalgorithmic}{\small}
\newcommand{\half}{{\textstyle\frac12}}
\newcommand{\Acal}{\mathcal A}
\newcommand{\Pcal}{\mathcal P}
\newcommand{\Ncal}{\mathcal N}
\newcommand{\Vcal}{\mathcal V}
\newtheorem{theorem}{Theorem}[section]
\newtheorem{example}{Example}[section]
\title{Primal-Dual Active-Set Methods for Isotonic Regression and Trend Filtering}
\author{
Zheng Han \\
Industrial and Systems Engineering\\
Lehigh University\\
Bethlehem, PA 18015 \\
\texttt{zhh210@lehigh.edu}
\And
Frank E.~Curtis \\
Industrial and Systems Engineering\\
Lehigh University\\
Bethlehem, PA 18015 \\
\texttt{frank.e.curtis@gmail.com} 
}
\begin{document}

\maketitle

\begin{abstract}
  Isotonic regression (IR) is a non-parametric calibration method used in supervised learning.  For performing large-scale IR, we propose a primal-dual active-set (PDAS) algorithm which, in contrast to the state-of-the-art Pool Adjacent Violators (PAV) algorithm, is easily warm-started and can be parallelized.  This warm-starting capability makes it well-suited for online settings.  We prove that, like the PAV algorithm, our PDAS algorithm for IR is convergent and has a work complexity of $\mathcal{O}(n)$, though our numerical experiments suggest that our PDAS algorithm is often faster than PAV.  In addition, we propose PDAS variants (with safeguarding to ensure convergence) for solving related trend filtering (TF) problems, providing the results of experiments to illustrate their effectiveness.
\end{abstract}

\section{Introduction}

Isotonic regression is a non-parametric method for fitting an arbitrary monotone function to a dataset \cite{barlow1972isotonic,brunk1972statistical} that has recently gained favor as a calibration method for supervised learning \cite{Graepel10,McMahan13,Niculescu-Mizil05,Zadrozny02}.  A well-known and efficient method for solving IR problems is the Pool Adjacent Violators (PAV) algorithm \cite{best90active}.  This method is easily implemented and enjoys a convergence guarantee with a work complexity of $\mathcal O(n)$ where $n$ is the dimension of the dataset.  A drawback of the PAV algorithm in large-scale settings, however, is that it is inherently sequential.  Consequently, in order to exploit parallelism, one has to resort to decomposing the IR problem \cite{kearsley1996approach}, where deciding the number of processors is nontrivial.  For example, a recent Spark implementation of a parallelized PAV method suffers from significant overhead \cite{isospark}. In addition, since the PAV algorithm must be initialized from a particular starting point, it cannot be warm-started---a fact that is especially detrimental when a sequence of IR problems need to be solved \cite{suo2014ordered}, such as in online settings where data points are added continually.  As an alternative, we propose a primal-dual active-set (PDAS) method for solving IR problems.  Our PDAS algorithm also has a convergence guarantee for IR and a work complexity of $\mathcal O(n)$, but can be warm-started and is easily parallelized.  We also provide PDAS algorithm variants for a related class of trend filtering (TF) problems.  Alternative approaches for certain TF problems include interior point methods \cite{kim09}, specialized ADMM methods \cite{ramdas2014fast}, and proximal methods \cite{barbero2014modular}, but advantages of active-set methods such as ours are that they may terminate finitely and often yield very accurate solutions.  For reasons such as these, they may be favorable for certain applications such as switch point identification and time series segmentation.  The results of numerical experiments are provided for all of our algorithm variants to illustrate their practical strengths.

This paper is organized as follows.  In \S\ref{sec:prob.description}, we define and describe the IR and TF problems for which our algorithms are designed. In \S\ref{sec:alg.isoreg}, we summarize and compare the well-known Pool Adjacent Violators (PAV) algorithm and our proposed primal-dual active-set (PDAS) method for solving IR problems.  PDAS variants (with and without safeguards) for solving related TF problems are presented in \S\ref{sec:pdas.tf}.  We report on our experimental results as well as our findings with other approaches in~\S\ref{sec:experiments}.  Finally, concluding remarks are provided in \S\ref{sec:conclusion}.

\section{Problem Descriptions}\label{sec:prob.description}

\paragraph{Isotonic Regression}  We consider the isotonic regression (IR) problem
\begin{equation}\label{prob.isoreg}\tag{IR}
  \min_{{\theta}\in\mathbb R^n} \half \sum_{i=1}^n \omega_i(y_i -\theta_i)^2\ \ \text{subject to}\ \ \theta_1 \leq \ldots \leq \theta_n,
\end{equation}
where $y \in \mathbb R^n$ represents observed data and $\omega \in \mathbb R^n_+$ represents weights for the data fitting term.  The goal of this optimization problem formulation is to determine a monotonically increasing step function that matches the observed data as closely as possible in a sense of distance defined by $\omega$.

\paragraph{Trend Filtering} Problem~\eqref{prob.isoreg} is related to a special case of the trend filtering problem
\begin{equation}\label{prob.general}\tag{TF}
  \min_{\theta\in\mathbb R^n}\ \phi(\theta),\ \ \text{where}\ \ \phi(\theta) = f(\theta) + \lambda g(\theta),
\end{equation}
$f : \mathbb R^n \mapsto \mathbb R$ is smooth and convex, $\lambda > 0$ is a regularization parameter, and the regularization function $g: \mathbb R^n \mapsto \mathbb R$ is convex but not necessarily smooth. In trend filtering or time series segmentation, $f$ is usually chosen to measure the distance between $\theta$ and $y$ while $g$ imposes desired properties on the solution $\theta$. A typical trend filtering problem has the form
\begin{equation*}
  f(\theta) =  \half \sum_{i=1}^n \omega_i(y_i -\theta_i)^2\ \ \text{with}\ \ g(\theta) = \|D\theta\|_1\ \ \text{or}\ \ g(\theta) = \|(D\theta)_+\|_1,
\end{equation*}
where $D$ is a first-order (or higher) difference operator and $(\gamma)_+ = \max\{\gamma,0\}$ (component-wise).
Specifically, as described in \cite{kim09}, a $k$-th order difference matrix $D^{(k,n)}\in\mathbb R^{(n-k)\times n}$ is defined recursively via the relation $D^{(k,n)} = D^{(1,n-k+1)}D^{(k-1,n)}$.  For example, the first and second order difference matrix $D^{(1,n)}$ and $D^{(2,n)}$ are defined, respectively, as
\begin{equation*}\small
  D^{(1,n)} =
  \begin{bmatrix}
    1 & -1 &    &   &  \\
      & 1  & -1 &   &  \\
      &    & \ddots & \ddots & \\
      &    &    & 1 & -1 \\
  \end{bmatrix}\mbox{ and }
  D^{(2,n)} =
  \begin{bmatrix}
    1 & -2 & 1   &   & & \\
      & 1  & -2 & 1  &  & \\
      &    & \ddots & \ddots & \ddots & \\
      &    &    & 1 & -2 &  1\\
  \end{bmatrix}.
\end{equation*}
When $g(\theta) = \|(D^{(1,n)}\theta)_+\|_1$ and $\lambda$ is sufficiently large, solving \eqref{prob.general} gives the solution of \eqref{prob.isoreg}.

\section{Algorithms for Isotonic Regression}\label{sec:alg.isoreg}

We describe and compare two efficient algorithms for solving problem~\eqref{prob.isoreg}; in particular, the PAV and our proposed PDAS algorithms are described and their corresponding theoretical properties are summarized in \S\ref{subsec:isoreg.pav} and \S\ref{subsec:isoreg.pdas}, respectively. Throughout this and the subsequent sections, we borrow the following notation from \cite{best90active} in the algorithm descriptions.

\paragraph{Notation} Let $J$ represent a partition of the variable indices $\{1,2,\ldots,n\}$ into ordered blocks $\{B_1,B_2,\ldots\}$ where each block consists of consecutive indices, i.e., each block has the form $\{p,\ldots,q\}$ for $p\leq q$. The immediate predecessor (successor) of block $B$ is denoted as $B_{-}$ ($B_{+}$).  By convention, $B_{-}$ ($B_{+}$) equals $\emptyset$ when $B$ is the initial (final) block.  The weighted average of the elements of $y$ in block $B$ is denoted $\mbox{Av}(B) := (\sum_{i=p}^q \omega_i y_i)/(\sum_{i=p}^q \omega_i)$. For each index $i \in B = \{p,\dots,q\}$, we define the ``lower'' and ``upper'' sets
\begin{equation*}
  L_i(J) = \{p,p+1,\ldots,i\}\ \ \text{and}\ \ U_i(J) = \{i+1,i+2,\ldots, q\}.
\end{equation*}
Hereinafter, we shall use $L_i$ and $U_i$ for brevity when their dependence on a particular $J$ is clear.

\subsection{PAV for Isotonic Regression}\label{subsec:isoreg.pav}

We describe briefly the PAV algorithm \cite{best90active,barlow1972isotonic,tibshirani2011nearly,grotzinger1984projections,brunk1972statistical,kearsley1996approach}, state its main theoretical properties, and discuss its important features in this section.  To start with, we rephrase the PAV algorithm from \cite{best90active}, where the algorithm is shown to replicate a dual active-set method for quadratic optimization.

\begin{algorithm}[H]\small
  \caption{PAV for Isotonic Regression}
  \label{alg.pava}
  \begin{algorithmic}[1]
    \State Input the initial partition $J = \{\{1\},\ldots,\{n\}\}$ and set $C=\{1\}$
    \Loop
      \If{$C_+ = \emptyset$}
        \State Terminate and \textbf{return} $\theta_i = \mbox{Av}(B)$ for each $i\in B$ for each $B\in J$
      \EndIf
      \If{$\mbox{Av}(C)\leq \mbox{Av}(C_+)$}
        \State Set $C\gets C_+$
        \Else
        \State Set $J \gets \big( J\backslash \{C,C_+\}\big)\cup (C\cup C_+)$ and $C \gets C\cup C_+$
        \Comment{Merge $C$ with $C_+$}
        \While{$\mbox{Av}(C_-) > \mbox{Av}(C)$ and $C_-\neq\emptyset$}
          \State Set $J \gets \big( J\backslash \{C_-,C\}\big)\cup (C_-\cup C)$ and $C \gets C_-\cup C$
          \Comment{Merge $C$ with $C_-$}
        \EndWhile
      \EndIf
    \EndLoop
  \end{algorithmic}
\end{algorithm}

The main idea of Algorithm~\ref{alg.pava} can be understood as follows.  Initially, each index is represented by a separate block.  The algorithm then sequentially visits all blocks, merging a block with its successor whenever a ``violator'' is met, i.e., whenever a block has a weighted average greater than its successor.  Once any merge occurs, the algorithm searches backwards to perform subsequent merges in order to ensure that, at the end of any \textbf{loop} iteration, no violators exist up to the furthest visited block.  Once all blocks have been visited, no violator exists and the solution $\theta$ will be monotonically increasing.

An impressive property of Algorithm~\ref{alg.pava} is that by storing an intermediate value for each block and showing that at most $n$ merge operations may occur, one obtains an efficient implementation that solves problem~\eqref{prob.isoreg} within $\mathcal O(n)$ elementary arithmetic operations \cite{grotzinger1984projections}.  Due to this fact and its good practical performance, Algorithm~\ref{alg.pava} has been popular since its invention.  However, we argue that the PAV algorithm does have critical drawbacks when it comes to solving large-scale problems of interest today.  First, Algorithm~\ref{alg.pava} must be initialized with $J = \{\{1\},\dots,\{n\}\}$, which is unfortunate when one has a better initial partition, such as when one is solving a sequence of related instances of~\eqref{prob.isoreg} \cite{suo2014ordered}.  In addition, the sequential nature of PAV makes it difficult to leverage multi-processor infrastructures.

\subsection{PDAS for Isotonic Regression}\label{subsec:isoreg.pdas}

Primal-dual active-set (PDAS) methods have been proposed in the literature for solving Linear Complementarity Problems (LCPs) \cite{Agan84}, bound-constrained QPs (BQPs) \cite{HintItoKuni03}, and generally-constrained QPs \cite{CurtHanRobi14}. To our knowledge, however, the application and theoretical analysis of a tailored PDAS method for solving problem~\eqref{prob.isoreg} has not previously been studied.

In this section, we propose a PDAS method designed explicitly for \eqref{prob.isoreg} and discuss its theoretical guarantees and practical benefits. We first reveal the relationship between problem~\eqref{prob.isoreg} and a special class of convex BQPs for which a primal-dual active-set (PDAS) method is known to be well-suited. We then propose our PDAS algorithm tailored for solving \eqref{prob.isoreg}.  Finally, the complexity of our proposed algorithm is analyzed and its key features and properties are discussed.

\paragraph{\eqref{prob.isoreg} and \eqref{prob.dual}}  Let $\Omega=\mbox{diag}(\omega_1,\ldots,\omega_n)$ be the diagonal weight matrix.  The dual of \eqref{prob.isoreg} is then
\begin{equation}\label{prob.dual}\tag{BQP}
  \min_{z\in\mathbb R_+^{n-1}} \half z^TD\Omega^{-1}D^Tz - y^TD^Tz,
\end{equation}
where
\begin{equation*}\small
  D\Omega^{-1}D^T = 
  \begin{bmatrix}
    \frac{2}{\omega_1} & -\frac{1}{\omega_2} & 0 & \cdots & 0\\
    -\frac{1}{\omega_2} & \frac{2}{\omega_2} &-\frac{1}{\omega_3} & \cdots & 0\\
    0 & -\frac{1}{\omega_3} & \frac{2}{\omega_3} & \ddots & 0\\
    \vdots&\vdots&\ddots&\ddots&-\frac{1}{\omega_{n-2}}\\
    0 & 0 & \cdots & -\frac{1}{\omega_{n-2}} & \frac{2}{\omega_{n-1}}\\
  \end{bmatrix}
  \ \ \text{and}\ \ 
  Dy = 
  \begin{bmatrix}
    y_1 - y_2\\
    y_2 - y_3\\
    \vdots\\
    y_{n-1}-y_n
  \end{bmatrix}.
\end{equation*}
Since $D\Omega^{-1}D^T$ is positive definite with non-positive off-diagonal entries, it is an $M$-matrix, meaning that \eqref{prob.dual} has a form for which a PDAS method is well-suited \cite{HintItoKuni03}.  In descriptions of PDAS methods for BQP such as that in \cite{HintItoKuni03}, the notion of a partition corresponds to a division of the index set for $z$ into ``active'' and ``inactive'' sets.  There is a one-to-one correspondence between a partition of \eqref{prob.dual} and that of \eqref{prob.isoreg}; specifically, the non-zero indices in $z$ correspond to the boundaries of the blocks of a partition for problem~\eqref{prob.isoreg}.  We have the following result for the present setting.

\begin{theorem}[Theorem 3.2, \cite{HintItoKuni03}]\label{corol.property}
  If the PDAS method from \cite{HintItoKuni03} is applied to solve problem~\eqref{prob.dual}, then the iterate sequence $\{z_k\}$ is nondecreasing, has $z_k \geq 0$ for all $k\geq 1$, and converges to the optimal solution of \eqref{prob.dual}.
\end{theorem}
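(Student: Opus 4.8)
\textit{Proof proposal.} The plan is to obtain the statement as a direct corollary of Theorem~3.2 in \cite{HintItoKuni03}, so the actual work is to verify that \eqref{prob.dual} is an instance of the problem class covered by that theorem. The reference analyzes the primal-dual active-set iteration applied to a bound-constrained convex quadratic program of the form $\min \half z^TAz - b^Tz$ subject to (one-sided) simple bounds on $z$, and establishes its conclusions under the hypothesis that the Hessian $A$ is an $M$-matrix. I would therefore first make the identification $A = D\Omega^{-1}D^T$, $b = Dy$, and bound constraint $z \ge 0$, matching \eqref{prob.dual} term by term to the template in \cite{HintItoKuni03}.

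The one nontrivial hypothesis to check is that $A = D\Omega^{-1}D^T$ is an $M$-matrix, and this is immediate from the explicit form displayed just above \eqref{prob.dual}: its diagonal entries equal $2/\omega_i > 0$ (using $\omega \in \mathbb R^n_+$) and its off-diagonal entries equal $-1/\omega_i \le 0$, so it is symmetric with nonpositive off-diagonals; and it is positive definite because $D$ is the first-order difference operator (full row rank) and $\Omega^{-1}$ is positive definite, so that $z^TD\Omega^{-1}D^Tz = \|\Omega^{-1/2}D^Tz\|_2^2 > 0$ for $z\neq 0$. A symmetric positive definite matrix with nonpositive off-diagonal entries is an $M$-matrix (a Stieltjes matrix), so the hypothesis of Theorem~3.2 holds and the three claimed conclusions---$z_k\ge 0$ for $k\ge 1$, monotonicity $z_k\le z_{k+1}$, and convergence to the unique minimizer of \eqref{prob.dual}---follow verbatim. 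If a self-contained argument were wanted in place of a citation, the monotonicity claim is the crux: it is proved by a sign-chasing argument that exploits $A^{-1}\ge 0$ entrywise, which forces the semismooth-Newton correction at each iteration to move indices into the active set (never out), so the inactive set shrinks and the iterates increase; feasibility after the first step and finite termination then follow from monotonicity together with the finiteness of the number of possible partitions.

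The point requiring the most care---and the likeliest source of a sign error---is matching orientation conventions: depending on how \cite{HintItoKuni03} states the bounds, one may need the reflection $z\mapsto -z$ to pass between a lower-bounded and an upper-bounded form, and this reflection also flips the sign of the off-diagonal entries of the Hessian, so one must be sure the $M$-matrix property is invoked for the correctly oriented problem (here, $z\ge 0$ with Hessian $D\Omega^{-1}D^T$). Once that bookkeeping is aligned, no further estimates are needed and the result is just a transcription of the cited theorem into the notation of \eqref{prob.dual}.
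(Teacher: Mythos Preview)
Your proposal is correct and matches the paper's approach exactly: the paper does not give a standalone proof of this theorem but simply observes (in the text immediately preceding the statement) that $D\Omega^{-1}D^T$ is positive definite with nonpositive off-diagonals, hence an $M$-matrix, and then invokes Theorem~3.2 of \cite{HintItoKuni03} directly. Your write-up is in fact more detailed than the paper's, since you also sketch the monotonicity mechanism behind the cited result and flag the orientation bookkeeping, neither of which the paper spells out.
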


\paragraph{A PDAS Algorithm for \eqref{prob.isoreg}}  One can apply the PDAS method from \cite{HintItoKuni03} to solve~\eqref{prob.isoreg} by applying the approach to its dual~\eqref{prob.dual}.  However, a straightforward application would fail to exploit the special structure of problem~\eqref{prob.isoreg}.  Algorithm~\ref{alg.pdas.practical}, on the other hand, generates the same sequence of iterates as the PDAS method of \cite{HintItoKuni03}, but is written in a much more computationally efficient form.

\begin{algorithm}[ht]\small
  \caption{PDAS for Isotonic Regression}
  \label{alg.pdas.practical}
  \begin{algorithmic}[1]
    \State Input an initial partition $J_0$ \label{step.pdas.practical.init.start}
        \State For each $i \in B = \{p,\ldots,q\} \in J_0$, set 
        \begin{equation*}
            \theta_i \gets \frac{\sum_{i\in B} \omega_i y_i}{\sum_{i\in B} \omega_i}\ \ \text{and}\ \
            z_i \gets
            \begin{cases}
              \omega_i(y_i - \theta_i) & \mbox{ if }i= p \\
              0 & \mbox{ if }i=q\neq n \\
              z_{i-1} +\omega_i(y_i - \theta_i) & \mbox{ otherwise }
            \end{cases}
        \end{equation*}
      \State Initialize $J_1 \gets J_0$
      \State \textbf{for each} $i\in B \in J_1$ with $z_i < 0$, set $J_1 \gets (J_1\backslash B) \cup \{L_i, U_i\}$\label{step.pdas.practical.split}
      \Comment{Split $B$ into $L_i$ and $U_i$}
      \State \textbf{for each} $B\in J_1$, set $\alpha_B \gets \sum_{i\in B} \omega_i y_i$, $\beta_B \gets \sum_{i\in B} \omega_i$, and $\mu_B \gets \alpha_B/\beta_B$\label{step.pdas.practical.init.end}
    \For {$k = 1,2,\dots$} \label{step.pdas.practical.begin.loop}
      \State Initialize $J_{k+1}\gets J_k$
      \State \textbf{for each} $\{B_s,\dots,B_t\} \subseteq J_k$ with $\mu_{B_{s-1}}\leq \mu_{B_s}>\cdots>\mu_{B_t}\leq \mu_{B_{t+1}}$ \Comment{Merge $\{B_s,\dots,B_t\}$}
        \begin{align*}\small
          \mbox{Let }& N\gets \bigcup_{j=s}^t B_j \ \mbox{and update}\ \ J_{k+1}\gets (J_{k+1}\cup N)\backslash\{B_s,\ldots,B_t\},\\
          \mbox{Set }&  \alpha_N \gets \sum_{j=s}^t \alpha_{B_j},\ \ \beta_N \gets \sum_{j=s}^t \beta_{B_j},\ \mbox{and}\ \mu_N \gets \alpha_N/\beta_N
        \end{align*}
      \State \textbf{if }$J_{k+1} = J_k$, \textbf{then} break\label{step.pdas.practical.end.loop}
    \EndFor
  \State \textbf{for each} $B\in J_k$, set $\theta_i\gets \mu_B$ for all $i\in B$
  \end{algorithmic}
\end{algorithm}

The major difference between Algorithms~\ref{alg.pava} and \ref{alg.pdas.practical} is the manner in which blocks are merged.  In Algorithm~\ref{alg.pava}, each merge involves a single pair of adjacent blocks, whereas Algorithm~\ref{alg.pdas.practical} allows simultaneous merge operations, each of which may involve more than two blocks.  In the following theorem, we prove that Algorithm~\ref{alg.pdas.practical}, like the careful implementation \cite{grotzinger1984projections} of Algorithm~\ref{alg.pava}, solves problem~\eqref{prob.isoreg} in $\mathcal O(n)$ elementary arithmetic operations.

\begin{theorem}\label{theorem.complexity.pdas}
  If Algorithm~\ref{alg.pdas.practical} is applied to solve problem~\eqref{prob.isoreg}, then it will yield the optimal solution for \eqref{prob.isoreg} within $\mathcal O(n)$ elementary arithmetic operations.
\end{theorem}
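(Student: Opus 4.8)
The plan is to establish two facts: first, that Algorithm~\ref{alg.pdas.practical} generates exactly the same sequence of partitions as the PDAS method of \cite{HintItoKuni03} applied to \eqref{prob.dual}, so that its correctness and finite termination follow from Theorem~\ref{corol.property}; and second, that the total arithmetic work over all iterations is $\mathcal O(n)$. For the first fact, I would argue that a nonnegative dual variable $z$ corresponds to a partition $J$ whose block boundaries are exactly the indices where $z_i = 0$, and conversely that the formulas in the initialization step reconstruct, from a partition, the unique primal-feasible $\theta$ (block-wise weighted averages) together with the associated $z$ via the identity $z_i = \sum_{j\le i}\omega_j(y_j-\theta_j)$ restricted within a block. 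The split step (Step~\ref{step.pdas.practical.split}) removes exactly those boundaries that the PDAS sign condition would deem ``inactive'' (negative $z_i$), and the merge step collapses any maximal run of blocks $B_s,\dots,B_t$ on which the averages fail monotonicity, which is precisely the set of constraints the PDAS method would flip from inactive to active. I would verify that merging a maximal ``up-then-down'' run $\mu_{B_{s-1}}\le\mu_{B_s}>\cdots>\mu_{B_t}\le\mu_{B_{t+1}}$ is the correct aggregate of the individual PDAS updates, using the $M$-matrix structure of $D\Omega^{-1}D^T$ to ensure the sign pattern of the recomputed $z$ is governed solely by adjacent-block average comparisons. Combined with Theorem~\ref{corol.property}, this shows the final partition has nondecreasing block averages, hence the returned $\theta$ is primal feasible and, by strong duality for the strictly convex QP, optimal.

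For the complexity bound, the key structural observation is that the number of blocks is monotonically nonincreasing across the \textbf{for} loop of Steps~\ref{step.pdas.practical.begin.loop}--\ref{step.pdas.practical.end.loop}: every merge strictly reduces the block count, and the loop terminates as soon as no merge occurs ($J_{k+1}=J_k$). Since there are at most $n$ blocks initially (the split step can at most double the count of $|J_0| \le n$ blocks, so still $\mathcal O(n)$), at most $\mathcal O(n)$ merge operations can ever be performed in total across all iterations. Here the crucial device, borrowed from the careful implementation of PAV in \cite{grotzinger1984projections}, is the maintenance of the running sums $\alpha_B$ and $\beta_B$: each merge of a run $\{B_s,\dots,B_t\}$ costs work proportional to the number of blocks merged, $t-s+1$, and these counts telescope so that the total merge work over the entire algorithm is $\mathcal O(n)$ rather than $\mathcal O(n)$ per iteration. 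I would also note that the initialization (Steps~\ref{step.pdas.practical.init.start}--\ref{step.pdas.practical.init.end}) touches each index $O(1)$ times and the final write-out of $\theta$ is $\mathcal O(n)$.

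The one subtlety requiring care is bounding the number of \emph{iterations} of the outer \textbf{for} loop itself: a naive reading suggests each iteration might scan all current blocks to detect merge runs, giving $\mathcal O(n)$ work per iteration and $\mathcal O(n^2)$ total in the worst case. I expect this to be the main obstacle, and I would resolve it the same way \cite{grotzinger1984projections} does for PAV --- by observing that the scan for violating runs need only re-examine blocks adjacent to those that changed in the previous iteration (newly formed merged blocks and their immediate neighbors), so the per-iteration scanning work is charged against the merges that caused it. Amortizing the scan cost against the $\mathcal O(n)$ total merges then yields the overall $\mathcal O(n)$ bound. An alternative, cleaner route that sidesteps iteration-counting entirely is to recognize that after the split step the partition's block averages define a sequence to which a single ``pool all non-monotone runs until stable'' procedure is applied, which is known to stabilize in $\mathcal O(n)$ total work; I would use whichever framing makes the amortization argument most transparent. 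Putting the $\mathcal O(n)$ initialization, the $\mathcal O(n)$ total merge-and-scan work, and the $\mathcal O(n)$ final assignment together gives the claimed bound.
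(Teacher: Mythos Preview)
Your approach is essentially the same as the paper's: establish equivalence with the PDAS method of \cite{HintItoKuni03} applied to \eqref{prob.dual} so that correctness and finite termination follow from Theorem~\ref{corol.property}, then bound the main loop by observing that there are at most $n$ merges in total and each merge costs $\mathcal O(1)$ via the stored $\alpha_B,\beta_B$, with $\mathcal O(n)$ for initialization and the final assignment. Your treatment is in fact more careful than the paper's on one point---the paper simply asserts that the \textbf{for} loop ``only involves merge operations'' without separately accounting for the cost of scanning to detect violating runs, whereas you identify this subtlety and propose an amortized argument charging scan work to the merges that trigger it.
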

\begin{proof}
  One can verify that Algorithm~\ref{alg.pdas.practical} is equivalent to applying the PDAS method from \cite{HintItoKuni03} to solve \eqref{prob.dual}, from which it follows by Theorem~\ref{corol.property} that it finds the optimal solution in finitely many iterations. The initialization process in Steps~\ref{step.pdas.practical.init.start}--\ref{step.pdas.practical.init.end} requires $\mathcal O(n)$ elementary arithmetic operations as each step involves at most a constant number of calculations with each value from the dataset.  As for the main loop involving Steps~\ref{step.pdas.practical.begin.loop}--\ref{step.pdas.practical.end.loop}, the introduction of $\alpha$ and $\beta$ ensures that the number of elementary arithmetic operations in merging two blocks becomes $\mathcal O(1)$.  Thus, since the \textbf{for} loop only involves merge operations and there can be at most $n$ merges, the desired result follows.
\end{proof}

\subsection{Further Discussion}\label{subsec:isoreg.comparison}

Algorithm~\ref{alg.pdas.practical} enjoys several nice features that Algorithm~\ref{alg.pava} and other relevant algorithms do not possess.  For one thing, the initial partition $J_0$ of Algorithm~\ref{alg.pdas.practical} can be chosen arbitrarily.  This allows the algorithm to be warm-started if one has a good optimal partition estimate.  This feature is particularly appealing when a sequence of related instances of \eqref{prob.isoreg} need to be solved \cite{suo2014ordered}.

Another interesting feature of Algorithm~\ref{alg.pdas.practical} is its potential to be parallelized.  This is possible since Algorithm~\ref{alg.pdas.practical} allows for multiple independent merge operations in each iteration; see Step~8 of Algorithm~\ref{alg.pdas.practical}.  As an illustrative example with $y = \{6,4,2,9,11,4\}$ and $\omega = e$, we demonstrate in Figure~\ref{fig:pdas.merge} the different behavior between Algorithm~\ref{alg.pdas.practical} and Algorithm~\ref{alg.pava} when applied on this data set.

\begin{figure}[ht]
\centering
  \includegraphics[height=2.4in]{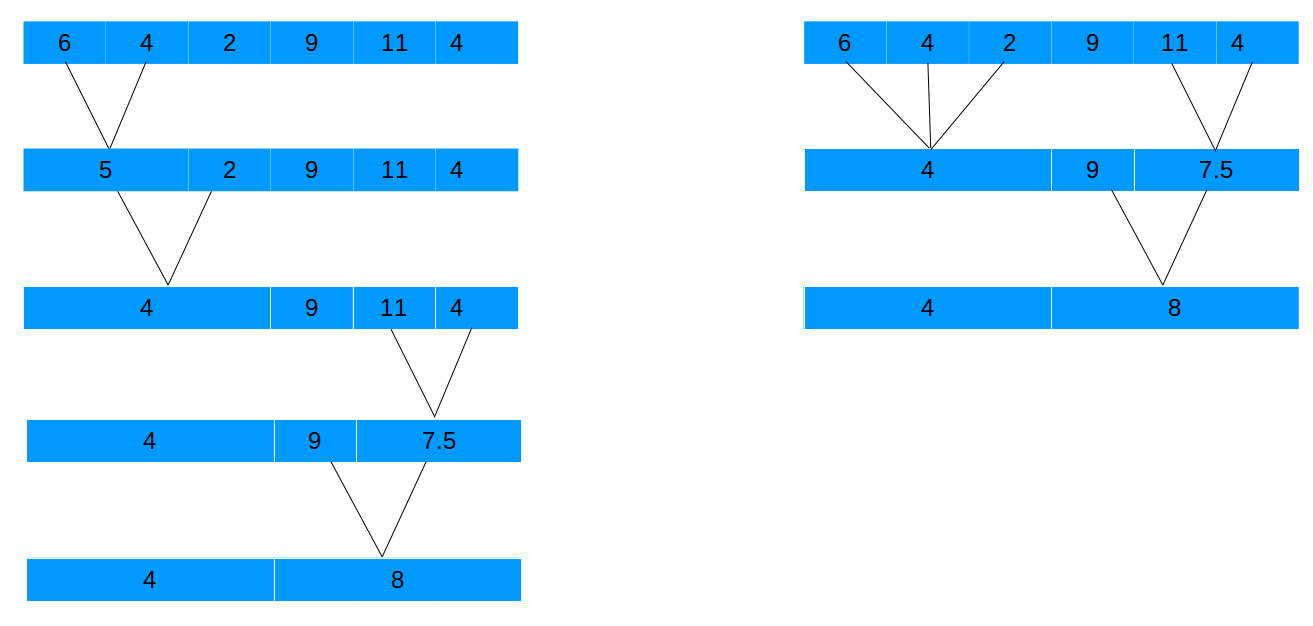}
\caption{Illustration of per-iteration merge operations in PAV (left) and PDAS (right).}
\label{fig:pdas.merge}
\end{figure}
As illustrated in Figure~\ref{fig:pdas.merge}, each iteration of PAV only merges two consecutive blocks whereas PDAS allows multiple consecutive blocks to be merged simultaneously throughout the dataset.  Notice also that PDAS only requires three division operations while PAV requires four, despite the fact that in both methods the number of merge operations are counted as four.

\section{PDAS for Trend Filtering}\label{sec:pdas.tf}

The trend filtering problem~\eqref{prob.general} can be viewed as a generalization of problem~\eqref{prob.isoreg}.  While \eqref{prob.isoreg} imposes monotonicity on the solution vector $\theta$, variants of \eqref{prob.general} can impose other related properties, as illustrated in \S\ref{subsec:tf.reg}. Consequently, it is natural to extend PDAS for solving \eqref{prob.general}, as we do in \S\ref{subsec:tf.pdas}. However, since a direct application of a PDAS method may cycle when solving certain versions of~\eqref{prob.general}, we propose safeguarding strategies to ensure convergence; see~\S\ref{subsec:tf.safeguard}.

\subsection{Regularization with Difference Operators}\label{subsec:tf.reg}

Common choices for the regularization function in problem~\eqref{prob.general} are $g(\theta) = g_1(\theta):= \|D^{(d,n)}\theta\|_1$ or $g(\theta) = g_{1+}(\theta) = \|(D^{(d,n)}\theta)_+\|_1$, where $D^{(d,n)}\in\mathbb R^{(n-d)\times n}$ is the $d$-order difference matrix on~$\mathbb R^{n}$. The choice of the regularization function determines the properties that one imposes on $\theta$.  We illustrate the typical behavior of $\theta$ for different choices of the regularization in Figure~\ref{fig:piecewise}.
\begin{figure}[htb]
\centering
\subfigure[$g = g_{1+}$, $d=1$]{
  \includegraphics[width=2in]{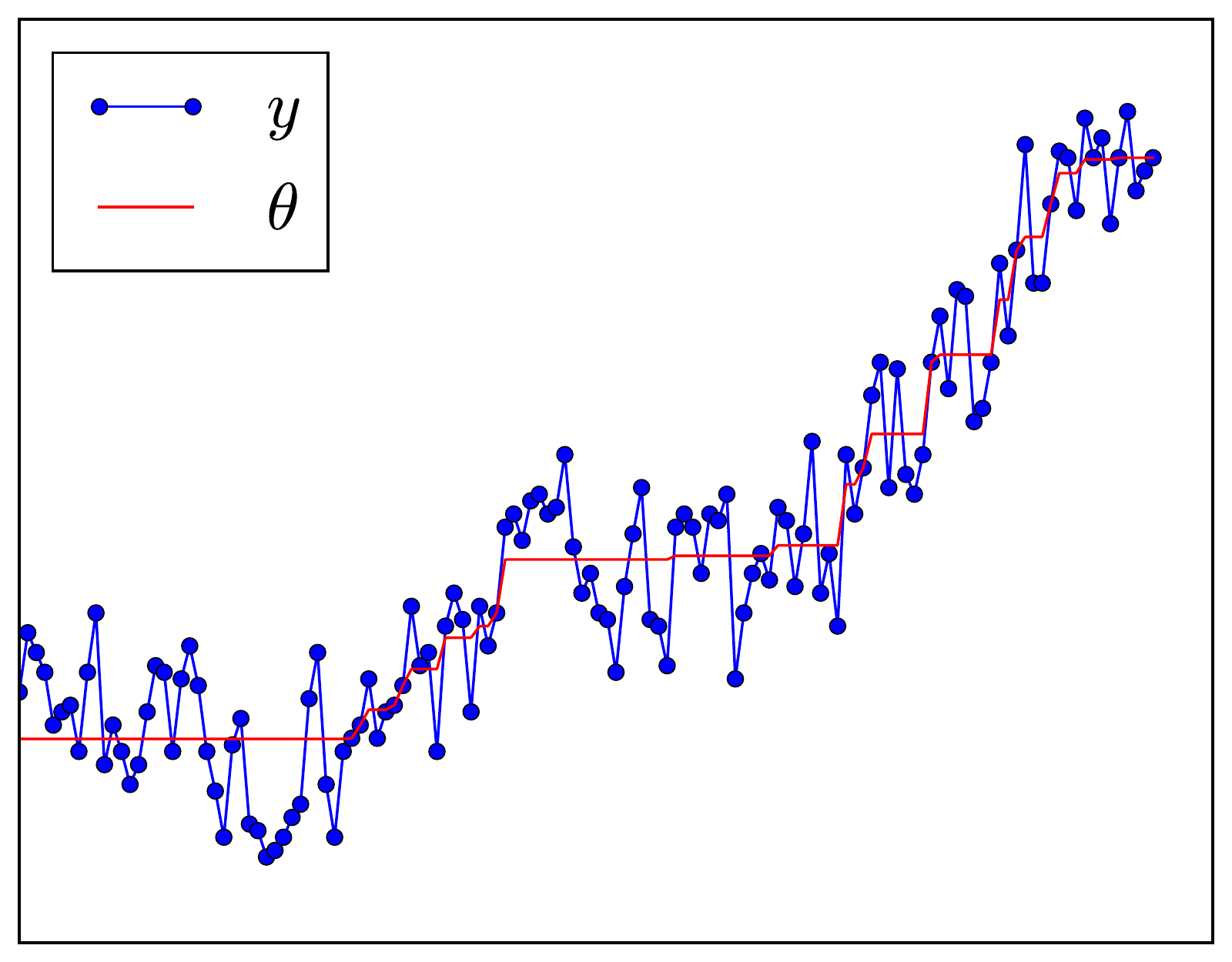}
 }
\subfigure[$g = g_1$, $d=1$]{
  \includegraphics[width = 2in]{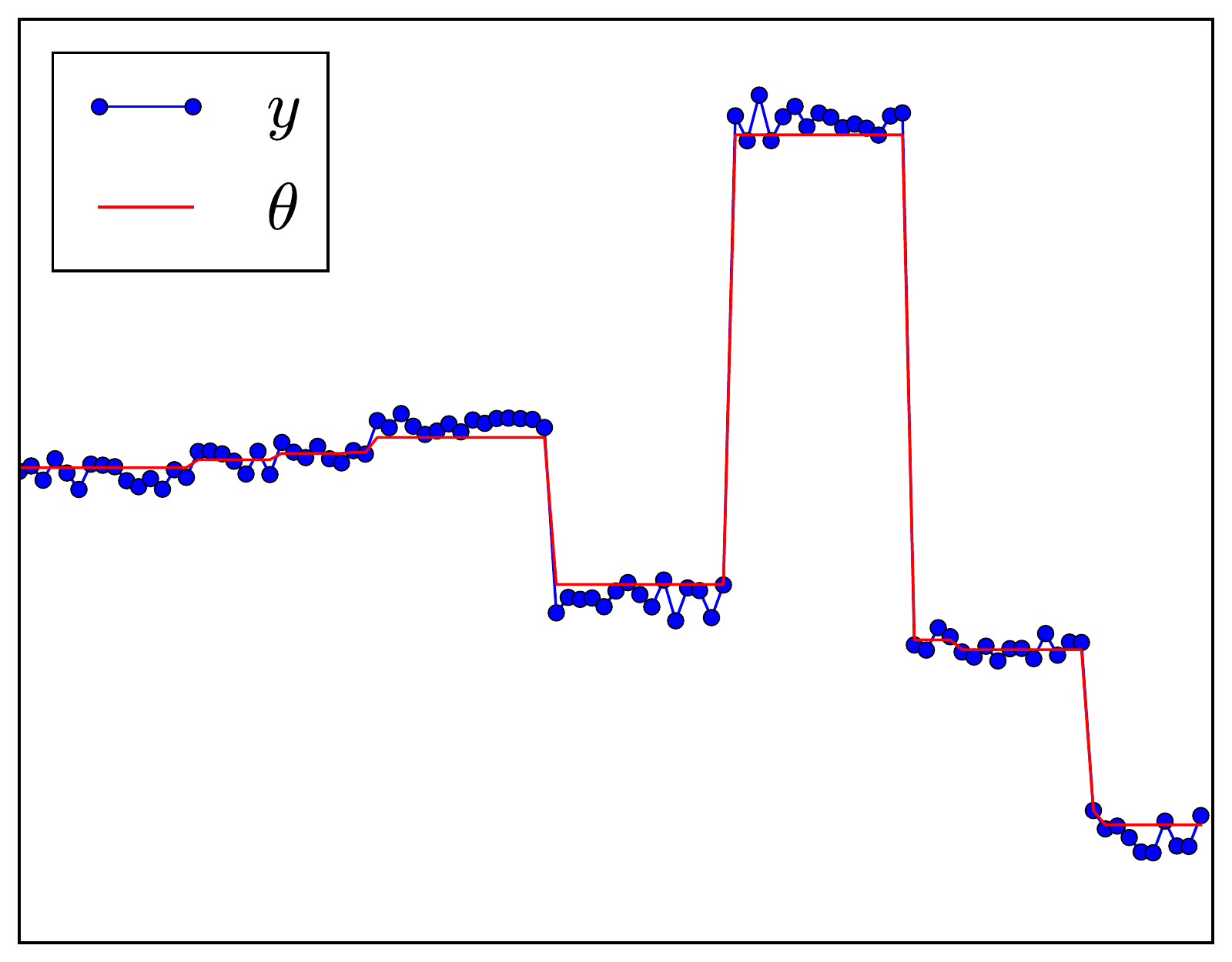}

}
\subfigure[$g=g_{1+}$, $d=2$]{
  \includegraphics[width = 2in]{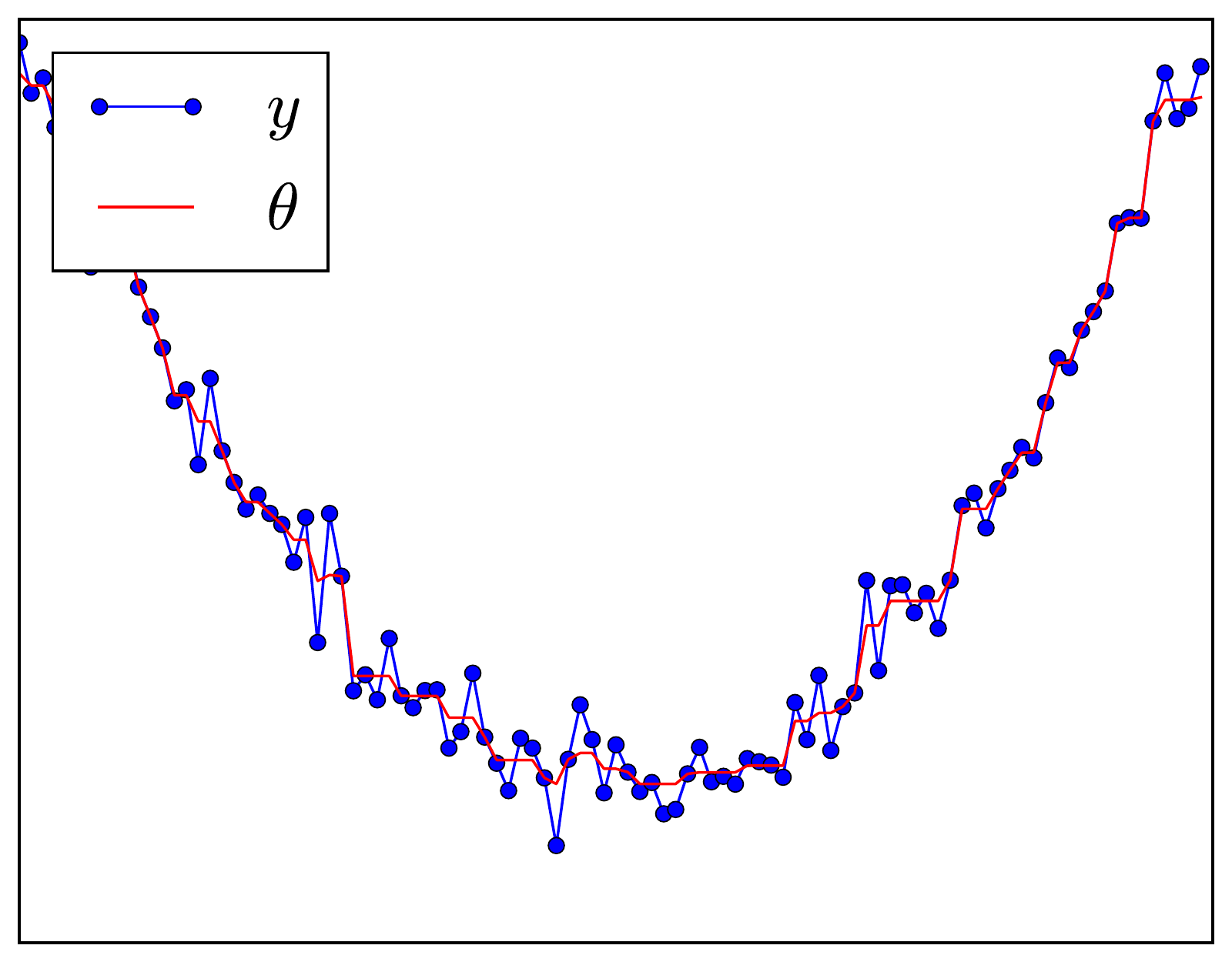}
}
\subfigure[$g=g_1$, $d=2$]{
  \includegraphics[width = 2in]{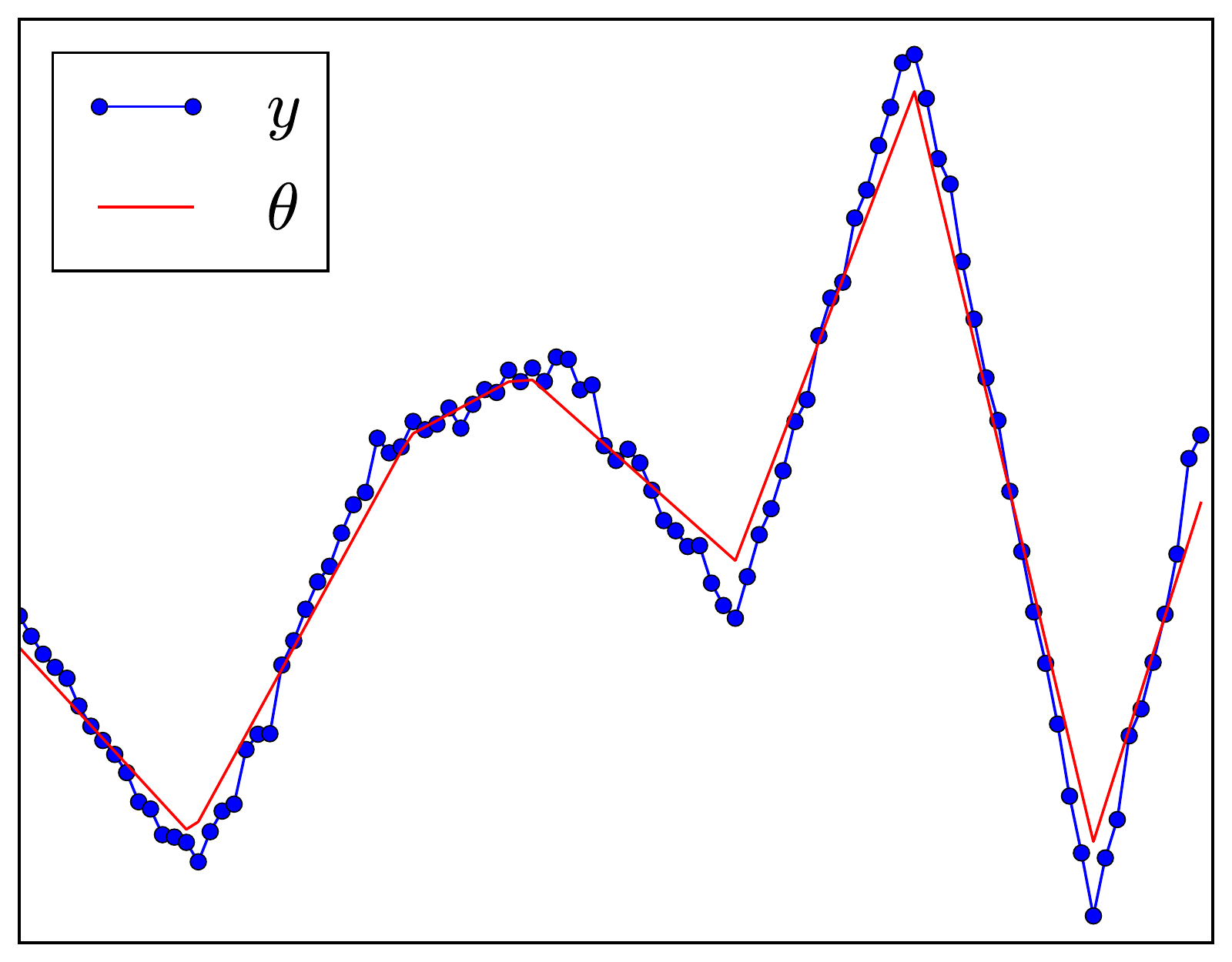}
}
\caption{Trend filtering solutions for different choices of $g$ and $D^{(d,n)}$.}
\label{fig:piecewise}
\end{figure}

As shown in Figure~\ref{fig:piecewise}, when $g = g_{1+}$ the fitted variable $\theta$ has the property of being nearly-monotone and nearly-convex for $d =1$ and $d=2$, respectively. Similarly, when $g=g_1$, the fitted curve would be piecewise constant and piecewise linear for $d =1$ and $d=2$, respectively.  Higher order difference operators may be used, though the first and second order ones are more widely used.

\subsection{A PDAS Framework for Trend Filtering}\label{subsec:tf.pdas}

For brevity, let $D := D^{(d,n)}$.  Denote the optimal solution of problem~\eqref{prob.general} as $(\theta^*,z^*)$.  Corresponding to this optimal solution, we may partition the indices of $D\theta^*$ as follows:
\begin{equation*}
  \mathcal P^* = \{j : (D\theta^*)_j > 0\};\ \ \mathcal N^* = \{j : (D\theta^*)_j < 0\};\ \ \mathcal A^* = \{j : (D\theta^*)_j = 0\}.
\end{equation*}

A typical PDAS framework consists of three steps, as shown in Algorithm~\ref{alg.pdas.frame}: subspace minimization (SSM), termination check, and partition update.  We now discuss each of these steps in turn.

\begin{algorithm}[ht]\small
  \caption{PDAS Framework}
  \label{alg.pdas.frame}
  \begin{algorithmic}[1]
    \State Input an initial partition $(\mathcal P,\mathcal N,\mathcal A)$
        \Loop
        \State Compute the subspace minimizer $(\theta,z)$ corresponding to $(\mathcal P,\mathcal N,\mathcal A)$ \label{step.pdas.frame.ssm}\Comment{subspace minimization (SSM)}
        \State \textbf{if} $(\theta,z)$ is optimal, \textbf{then} terminate and return $(\theta,z)$ \label{step.pdas.frame.check}\Comment{termination check}
        \State Compute a new partition $(\mathcal P,\mathcal N,\mathcal A)$ \label{step.pdas.frame.update} \Comment{partition update}
    \EndLoop
  \end{algorithmic}
\end{algorithm}

\paragraph{Subspace Minimization} Corresponding to an estimate $(\mathcal P,\mathcal N,\mathcal A)$ of the optimal partition $(\mathcal P^*,\mathcal N^*,\mathcal A^*)$ there exists a unique primal-dual estimate $(\theta,z)$ of $(\theta^*,z^*)$.  Denoting $\mathcal I$ as the union $\mathcal P \cup \mathcal N$, the following schematics show the processes for computing $(\theta,z)$ for problem \eqref{prob.general}.

\begin{minipage}[tb]{7cm}
\noindent\fbox{%
\begin{varwidth}{\dimexpr\linewidth-2\fboxsep-2\fboxrule\relax\small}
  SSM for $g(\theta) = \| (D\theta)_+\|_1$
  \begin{algorithmic}[0]
         \State Set 
          $z_j\gets 0 \mbox{ for }j\in\mathcal N \mbox{ and }z_j\gets 1 \mbox{ for }j\in\mathcal P.$
       \State Solve for $(\theta,z_{\mathcal A})$:
      \begin{equation}\label{eq:kkt1}
        \begin{bmatrix}
          I & \lambda D_{\mathcal A}^T\\
          \lambda D_{\mathcal A} & 0
        \end{bmatrix}
        \begin{bmatrix}
          \theta \\ z_{\mathcal A}
        \end{bmatrix}
        =
        \begin{bmatrix}
          y - \lambda D_{\mathcal I}^Tz_{\mathcal I}\\ 0
        \end{bmatrix}.
      \end{equation}
       \State Set
      \begin{equation*}
        \begin{aligned}
          \mathcal V_P &\gets \{j\in\mathcal P: (D\theta)_j < 0\};\\
          \mathcal V_N &\gets \{j\in\mathcal N: (D\theta)_j > 0\};\\
          \mathcal V_{AP} &\gets \{j\in\mathcal A: z_j > 1\};\\
          \mathcal V_{AN} &\gets \{j\in\mathcal A: z_j < 0\}.
        \end{aligned}
      \end{equation*}
     \end{algorithmic}
\end{varwidth}
}
\end{minipage}
\begin{minipage}[tb]{7cm}
\noindent\fbox{%
\begin{varwidth}{\dimexpr\linewidth-2\fboxsep-2\fboxrule\relax\small}
  SSM for $g(\theta) = \| D\theta\|_1$
  \begin{algorithmic}[0]
        \State Set 
          $z_j\gets -1 \mbox{ for }j\in\mathcal N \mbox{ and }z_j\gets 1 \mbox{ for }j\in\mathcal P.$
       \State Solve for $(\theta,z_{\mathcal A})$:
      \begin{equation}\label{eq:kkt0}
        \begin{bmatrix}
          I & \lambda D_{\mathcal A}^T\\
          \lambda D_{\mathcal A} & 0
        \end{bmatrix}
        \begin{bmatrix}
          \theta \\ z_{\mathcal A}
        \end{bmatrix}
        =
        \begin{bmatrix}
          y - \lambda D_{\mathcal I}^Tz_{\mathcal I}\\ 0
        \end{bmatrix}.
      \end{equation}
       \State Set
      \begin{equation*}
        \begin{aligned}
          \mathcal V_P &\gets \{j\in\mathcal P: (D\theta)_j < 0\};\\
          \mathcal V_N &\gets \{j\in\mathcal N: (D\theta)_j > 0\};\\
          \mathcal V_{AP} &\gets \{j\in\mathcal A: z_j > 1\};\\
          \mathcal V_{AN} &\gets \{j\in\mathcal A: z_j < -1\}.
        \end{aligned}
      \end{equation*}
      \end{algorithmic}
      \end{varwidth}
}
\end{minipage}

Note that the solution $(\theta,z_{\mathcal A})$ of system \eqref{eq:kkt1} or \eqref{eq:kkt0} could be efficiently obtained by
\begin{subequations}\label{eq.ssm}
  \begin{align}
    \mbox{solving for }z_{\mathcal A}\mbox{ the system }\qquad& D_{\mathcal A}D_{\mathcal A}^T z_{\mathcal A} = D_{\mathcal A}(y - \lambda D_{\mathcal I}^Tz_{\mathcal I})/\lambda ,  \label{eq.ssm.ls}  \\
    \mbox{then setting }\qquad& \theta \gets y - \lambda D^Tz. \label{eq.ssm.set}
  \end{align}
\end{subequations}
When $D_{\mathcal A}$ is a first (second) order difference matrix, $D_{\mathcal A}D_{\mathcal A}^T$ is a tridiagonal (quindiagonal) matrix, meaning that the left-hand side match in \eqref{eq.ssm.ls} is banded and $z_{\mathcal A}$ can be obtained cheaply.

\paragraph{Termination Check} One can easily show that a computed pair $(\theta,z)$ is optimal if the set $\mathcal V = \mathcal V_P\cup\mathcal V_N\cup\mathcal V_{AP}\cup\mathcal V_{AN}$, consisting of indices of $D\theta$ and $z$ corresponding to violated bounds, is empty \cite{CurtHan14}.  Thus, when $\mathcal V$ is empty, optimality has been reached and the algorithm terminates.  Otherwise, these sets indicate a manner in which the partition could be updated.

\paragraph{Partition Update} In PDAS methods, the indices of $D\theta$ and $z$ violating their bounds are deemed candidates for being switched from one index set in a partition to another.  Specifically, a standard update in a PDAS method \cite{HintItoKuni03} involves the following steps:
\begin{equation}\label{step.pdas.partition}
  \begin{aligned}
    \mathcal P &\gets (\mathcal P\backslash\mathcal V_P)\cup \mathcal V_{AP};\\
    \mathcal N &\gets (\mathcal P\backslash\mathcal V_N)\cup \mathcal V_{AN};\\
    \mathcal A &\gets \mathcal A\backslash (\mathcal V_{AP}\cup\mathcal V_{AN})\cup (\mathcal V_P\cup\mathcal V_N).
  \end{aligned}
\end{equation}

\subsection{Safeguarding}\label{subsec:tf.safeguard}

There is no convergence guarantee for Algorithm~\ref{alg.pdas.frame} for an arbitrary instance of \eqref{prob.general}; indeed, an example illustrating that the method can cycle when solving a BQP is given in \cite{CurtHanRobi14}.  This example is presented here to illustrate that Algorithm~\ref{alg.pdas.frame} can cycle for certain instances of \eqref{prob.general}.  
\begin{example}
Let $y = (603,  996,  502,   19,   56,  139)^T$, $\lambda = 100$, and $g(\theta) = \|D^{(2,6)}\theta \|_1$.  The iterates produced in the first few iterations of Algorithm~\ref{alg.pdas.frame} are shown in Table~\ref{tab:cycle}.
\begin{table}[ht]\footnotesize
  \centering
  \begin{tabular}{|c|l|l|l|l|l|}
    \hline
    Iter & $\mathcal P$ &  $\mathcal N$ &  $\mathcal A$ & $D\theta$ & $z$\\
    \hline
    0 & $\{2,3,4\}$& $\{1\}$   & $\emptyset$                              & $(13,-689,820,-254)^T$ & $(-1,1,1,1)^T$\\
    1 &$\{3\}$                     & $\emptyset$       &$\{1,2,4\}$             & $(0,0,\frac{4227}{38},0)^T$& $(-\frac{5293}{2280},-\frac{482}{475},1,\frac{5201}{5700})^T$\\
    2 & $\{3\}$            & $\{1,2\}$ & $\{4\}$                                  & $(-787,520,-16,0)^T$ & $(-1,-1,1,\frac{91}{100})^T$\\
    3 & $\emptyset$                & $\{1\}$           &  $\{2,3,4\}$  & $(-\frac{887}{5},0,0,0)^T$ & $(-1,\frac{127}{125},\frac{371}{125},\frac{943}{500})^T$\\
    4 & $\{2,3,4\}$& $\{1\}$   & $\emptyset$                              & $(13,-689,820,-254)^T$ & $(-1,1,1,1)^T$\\
    \hline
    \multicolumn{6}{|c|}{$\vdots$}\\
    \hline
  \end{tabular}
  \caption{An illustration of Algorithm~3 cycling}
  \label{tab:cycle}
\end{table}
Since the algorithm returns to a previously explored partition without computing an optimal solution, the algorithm cycles, i.e., it is not convergent for this problem instance from the given starting point.
\end{example}

A simple safeguarding strategy to overcome this issue and ensure convergence is proposed in \cite{kim10} and subsequently embedded in the work of \cite{ByrdChinNoceOzto12}.  In particular, when $|\mathcal V|$ fails to decrease for several consecutive iterations, a backup procedure is invoked in which \eqref{step.pdas.partition} is modified to only change partition membership of one index of $\mathcal V$.  We present this approach in Algorithm~\ref{alg.pdas.murty}.

\begin{algorithm}[ht]\small
  \caption{PDAS Framework with Safeguarding \cite{ByrdChinNoceOzto12}}
  \label{alg.pdas.murty}
  \begin{algorithmic}[1]
    \State Input $(\mathcal P,\mathcal N,\mathcal A)$ and an integer $\tt t_{max}$
    \State Initialize $\tt V_{best} \gets \infty$ and $t \gets 0$
    \Loop
      \State Compute the subspace minimizer $(\theta,z)$ corresponding to $(\mathcal P,\mathcal N,\mathcal A)$ 
      \State \textbf{if} $|\mathcal V| = 0$, \textbf{then} terminate and return $(\theta,z)$
      \If{$|\mathcal V| < \tt V_{best}$} 
      \State Set $t\gets 0$ and $\tt V_{best}\gets$ $|\mathcal V|$
      \State Apply \eqref{step.pdas.partition}
      \ElsIf{$|\mathcal V| \geq \tt V_{best}$}
      \State Set $t\gets t+1$
      \If{$t \leq \tt t_{max}$}
      \State Apply \eqref{step.pdas.partition}
      \ElsIf{$t > \tt t_{max}$}
      \State Set $j \gets \min\{i: i \in \mathcal V\}$ and apply a partition update by
      \begin{equation}\label{eq:safeguard.murty}
      \begin{aligned}
        &\mbox{moving } j \mbox{ from } \Pcal \mbox{ to } \Acal , \mbox{ if } j\in\Vcal_P\\
        &\mbox{moving } j \mbox{ from } \Ncal \mbox{ to } \Acal , \mbox{ if } j\in\Vcal_N\\
        &\mbox{moving } j \mbox{ from } \Acal \mbox{ to } \Pcal , \mbox{ if } j\in\Vcal_{AP}\\
        &\mbox{moving } j \mbox{ from } \Acal \mbox{ to } \Ncal , \mbox{ if } j\in\Vcal_{AN}
      \end{aligned}
      \end{equation}
      \EndIf
      \EndIf
      \EndLoop
  \end{algorithmic}
\end{algorithm}

The safeguard of Algorithm~\ref{alg.pdas.murty} employs a heuristic to decide whether to update the partition by \eqref{step.pdas.partition} or \eqref{eq:safeguard.murty}.  However, we have found an alternative strategy that performs better in our experiments. First, unlike that of \cite{kim10,ByrdChinNoceOzto12}, our safeguard changes the memberships of a portion of $\mathcal V$, where the portion size is dynamically updated. Another difference in the safeguard design is that we employ a finite queue (first-in-first-out) to store recent values of $|\mathcal V|$ of which the maximum serves as the reference measure that diminishes as the algorithm continues.  When an element is pushed into the queue that is full, the earliest element is removed.  We present our approach in Algorithm~\ref{alg.pdas.safeguard}.

\begin{algorithm}[ht]\small
  \caption{PDAS Framework with Safeguarding}
  \label{alg.pdas.safeguard}
  \begin{algorithmic}[1]
    \State Input $(\mathcal P,\mathcal N,\mathcal A)$, queue $Q_m$ with size $m$, proportion $p \in (0,1]$, parameter $\delta_s\in (0,1)$ and $\delta_e\in (1,\infty)$
    \Loop
      \State Compute the subspace minimizer $(\theta,z)$ corresponding to $(\mathcal P,\mathcal N,\mathcal A)$ 
      \State \textbf{if} $|\mathcal V| = 0$, \textbf{then} terminate and return $(\theta,z)$
      \State Set $\tt max/min\gets$ maximum/minimum of $Q_m$
      \If{$|\mathcal V| > \tt max$} 
      \State set $p \gets \max(\delta_s p,\frac{1}{|\mathcal V|})$ 
      \ElsIf{$|\mathcal V| < \tt min$}
      \State push $|\mathcal V|$ into $Q_m$ and set $p \gets \max(\delta_e p,1)$ 
      \Else 
      \State push $|\mathcal V|$ into $Q_m$
      \EndIf
      \State Sort $\mathcal V$ by $\max (\lambda|D\theta|,|z|)$ and apply \eqref{step.pdas.partition}, only changing the top $p|\mathcal V|$ indices 
      \EndLoop
  \end{algorithmic}
\end{algorithm}

Algorithm~\ref{alg.pdas.safeguard} can be understood as follows.  If $|\mathcal V| = 0$, then $(\theta,z)$ is optimal and the algorithm terminates.  Otherwise, the update \eqref{step.pdas.partition} is to be applied using only the $\lfloor p|\mathcal V| \rfloor$ indices from $\mathcal V$ corresponding to the largest violations.  If $p=1/|\mathcal V|$, then this corresponds to moving only one index as in \cite{kim10}, but if $p \in (1/|\mathcal V|,1]$, then a higher portion of violated indices may be moved.  As long as the reference value---i.e., the maximum of the values in the queue---decreases, the value for $p$ is maintained or is increased.  However, if the reference value fails to decrease, then $p$ is decreased.  Overall, since the procedure guarantees that the reference value is monotonically decreasing and that $p$ is sufficiently reduced whenever a new value for $|\mathcal V|$ is not below the reference value, our strategy preserves the convergence guarantees established in \cite{kim10} while yielding better results in our experiments.

\section{Experiments}\label{sec:experiments}
 We implemented Algorithms~\ref{alg.pdas.practical}, \ref{alg.pdas.frame}, \ref{alg.pdas.murty}, and \ref{alg.pdas.safeguard} in Python 2.7, using the Numpy (version 1.8.2) and Scipy (version 0.14.0) packages for matrix operations.  In the following subsections, we discuss the results of numerical experiments for solving randomly generated instances of problems \eqref{prob.isoreg} and \eqref{prob.general}.  For problem \eqref{prob.isoreg}, merge operations for Algorithm~\ref{alg.pdas.practical} were carried out sequentially (but recall Figure~\ref{fig:pdas.merge} in which we have illustrated that they could be carried out in parallel).  Throughout our experiments, we set $\omega$ as an all-one vector.

\subsection{Test on Isotonic Regression}\label{sec:numeric.isoreg}

We compare the performance of Algorithm~\ref{alg.pdas.practical} (\verb|PDAS|) with a Python implementation of Algorithm~\ref{alg.pava} (\verb|PAV|) that was later integrated into scikit-learn (version 0.13.1) \cite{scikit-learn}. The data $y_i$ are generated by $y_i \gets i + \varepsilon_i$ where $\varepsilon_i \sim \mathcal N(0,4)$.  The initial partition is set as $J_0 = \{\{1\},\{2\},\ldots,\{n\}\}$ for both algorithms.  We generated 10 random instances each for $n\in\{1\times 10^4,5\times 10^4,\ldots,33\times 10^4\}$.  Boxplots for running time in seconds ($\tt Time\ (s)$) and number of merge operations ($\tt \#\ Merge$) are reported in Figure~\ref{fig:ir.cold}.
\begin{figure}[ht]
\begin{center}
  \includegraphics[width = 2.6in]{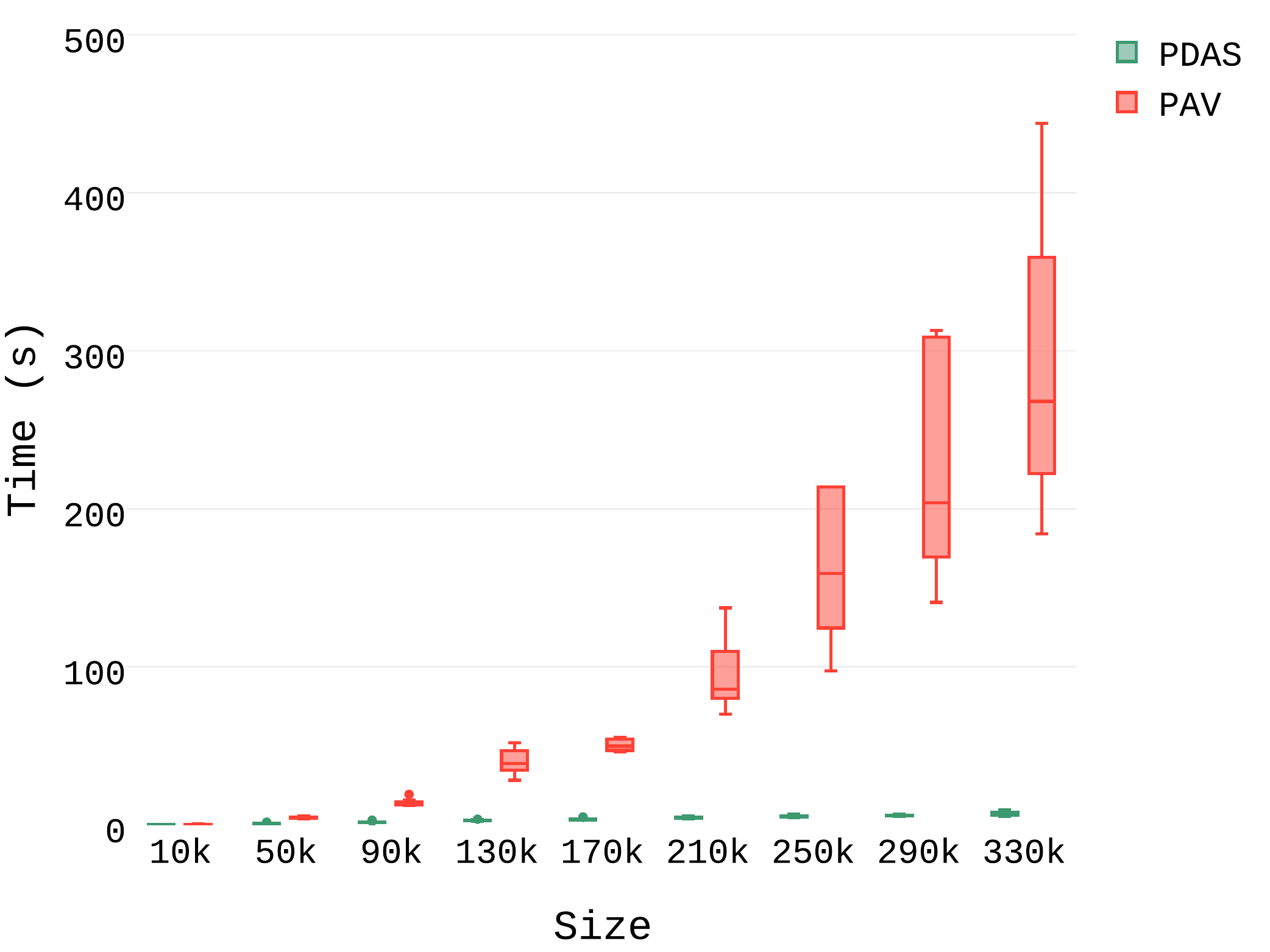}
  \includegraphics[width = 2.6in]{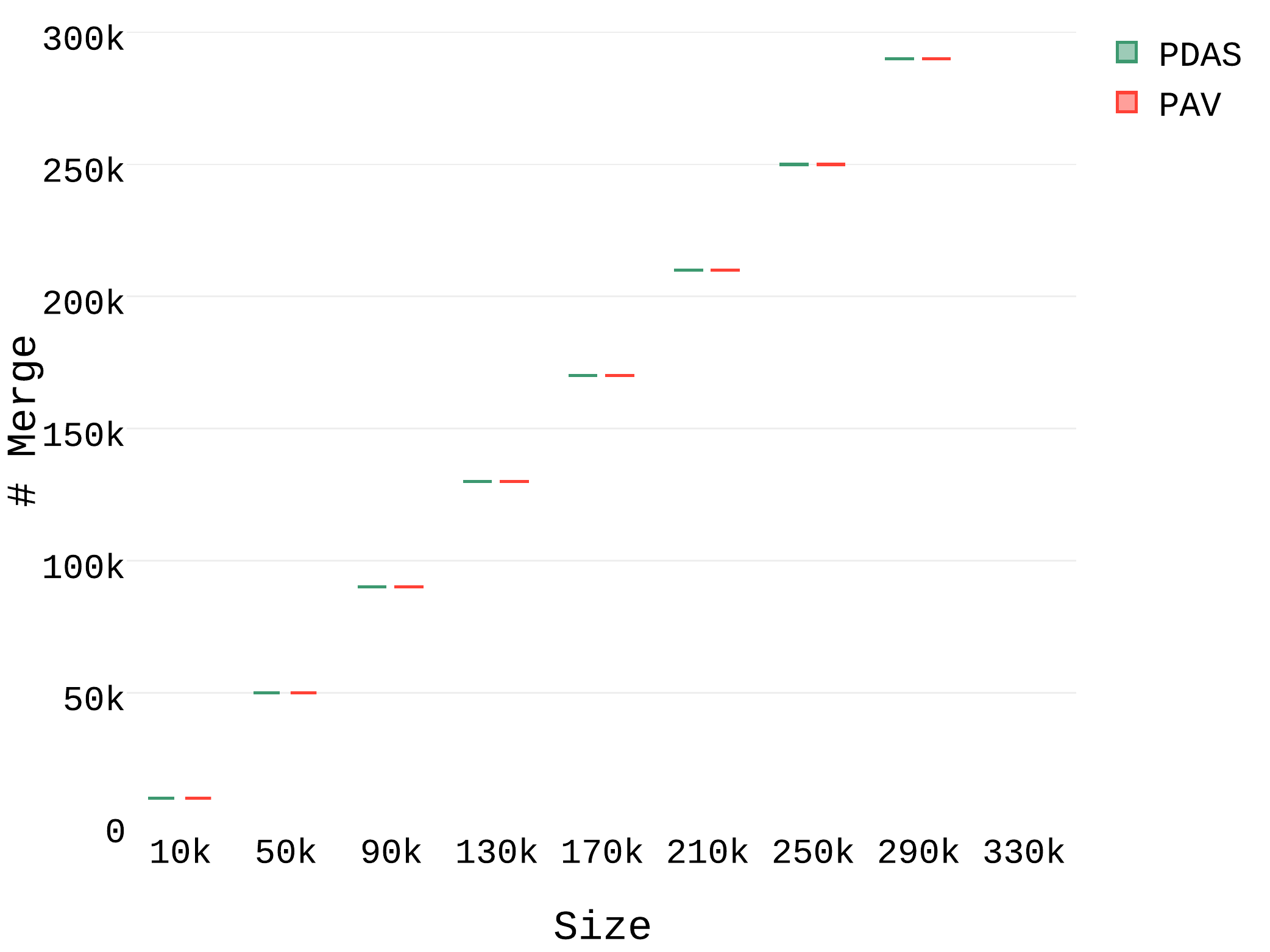}
\end{center}
\caption{Comparison of PDAS and PAV in running time (left) and \# of merges (right).}
  \label{fig:ir.cold}
\end{figure}

Figure~\ref{fig:ir.cold} demonstrates that our implementation of \verb|PDAS| outperforms \verb|PAV| in terms of running time.  That being said, when both use the set of singletons as the starting point, the numbers of merge operations performed by the two algorithms are nearly identical.

\paragraph{Warm-starting}  Figure~\ref{fig:ir.cold} does not show an obvious advantage of \verb|PDAS| over \verb|PAV| in terms of the numbers of merge operations.  However, as claimed, we now show an advantage of \verb|PDAS| in terms of its ability to exploit a good initial partition.  We simulate warm-starting \verb|PDAS| by generating an instance of \eqref{prob.isoreg} as in our previous experiment, solving it with \verb|PDAS|, and using the solution as the starting point for solving related instances for which the data vector $y$ has been perturbed.  In particular, for each problem size $n$, we generated 10 perturbed instances by adding a random variable $\epsilon_i \sim \mathcal N (0,10^{-2})$ to each $y_i$.  The results of solving these instances are reported in Figures~\ref{fig:warm} and \ref{fig:warm2}.
\begin{figure}[ht]
\begin{center}
  \includegraphics[width = 2.6in]{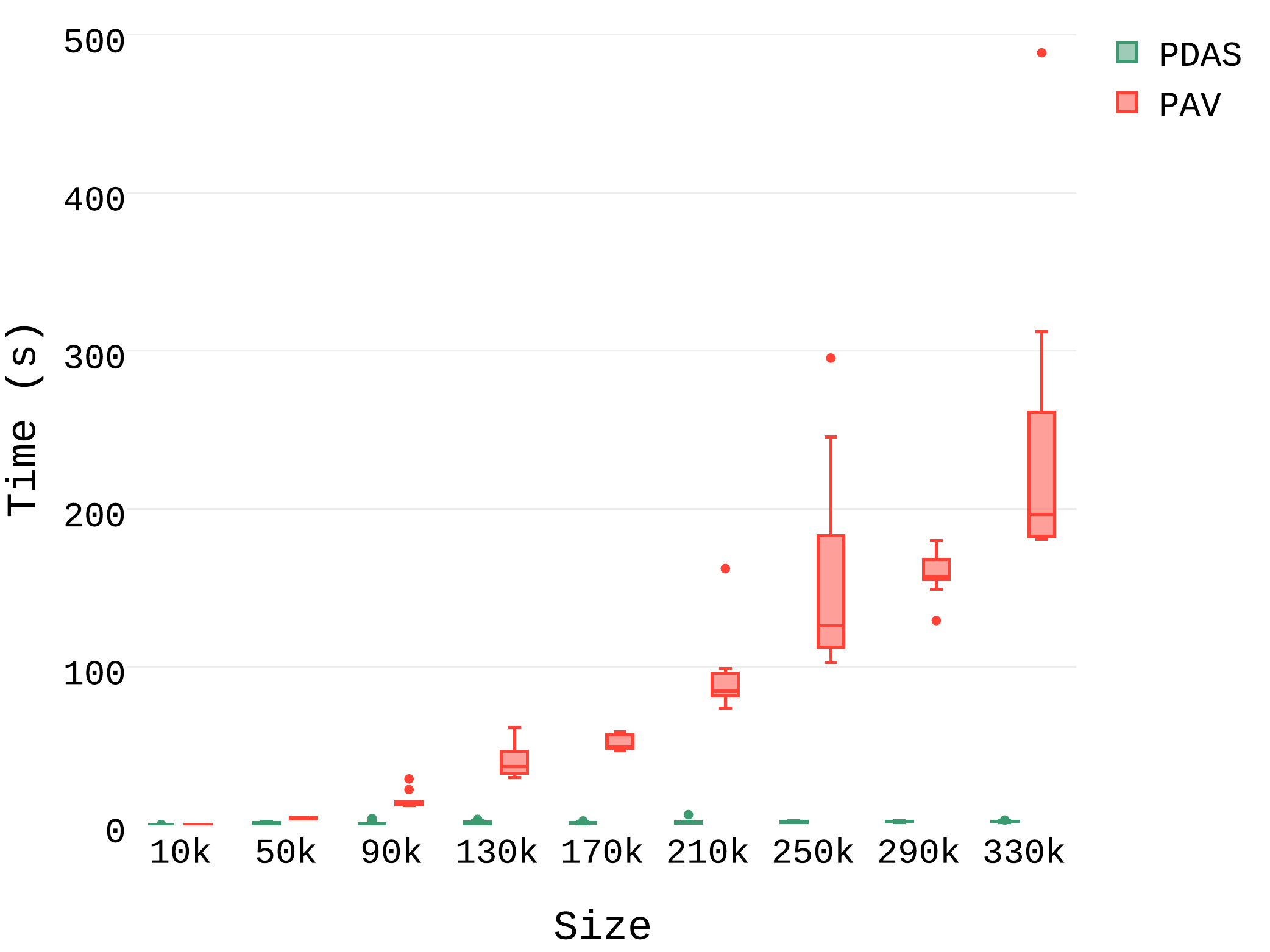}
  \includegraphics[width = 2.6in]{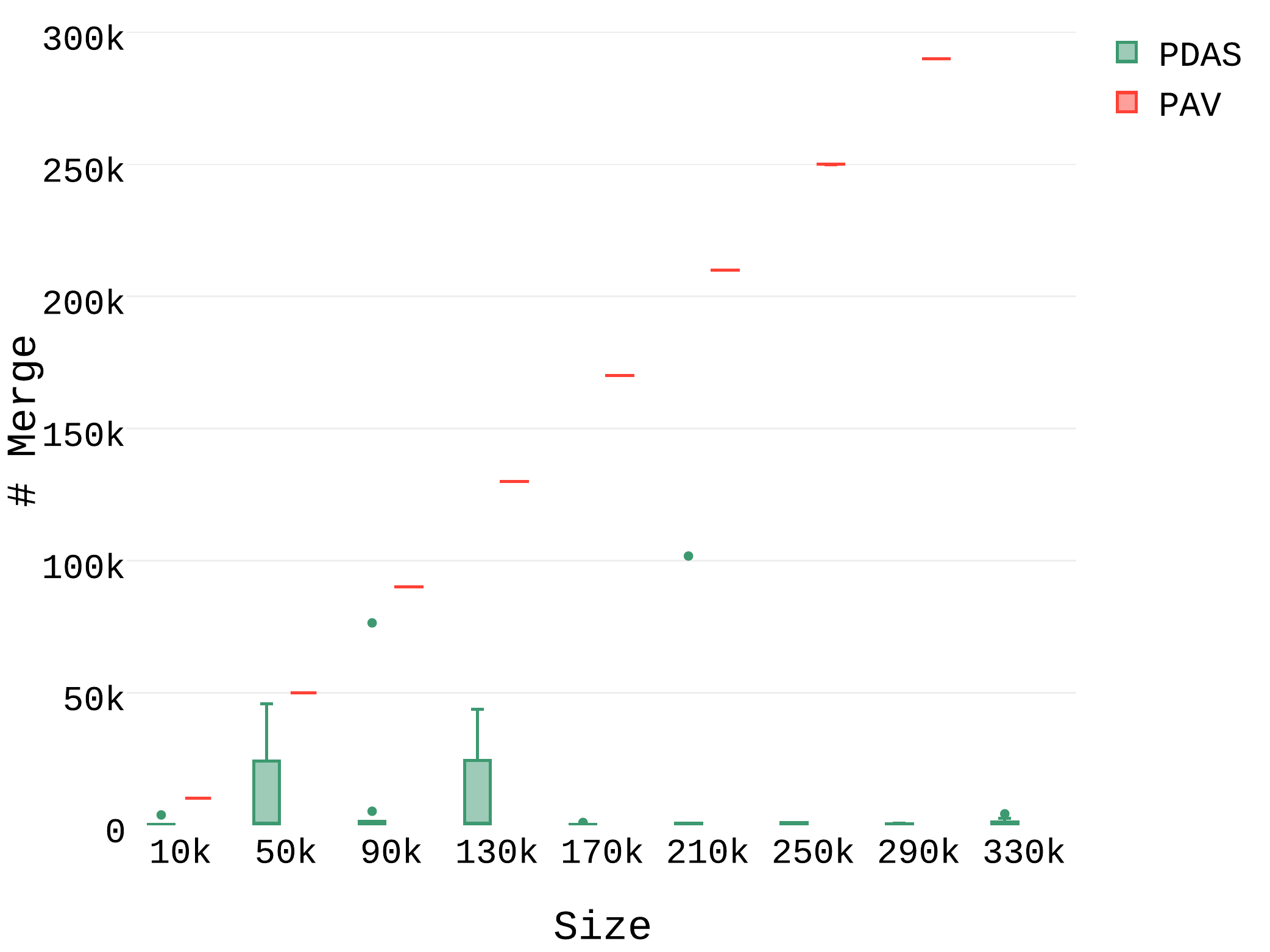}
\end{center}
\caption{Comparison of warm-started PDAS and PAV in running time (left) and \# of merges (right).}
  \label{fig:warm}
\end{figure}

\begin{figure}[ht]
\begin{center}
  \includegraphics[width = 2.6in]{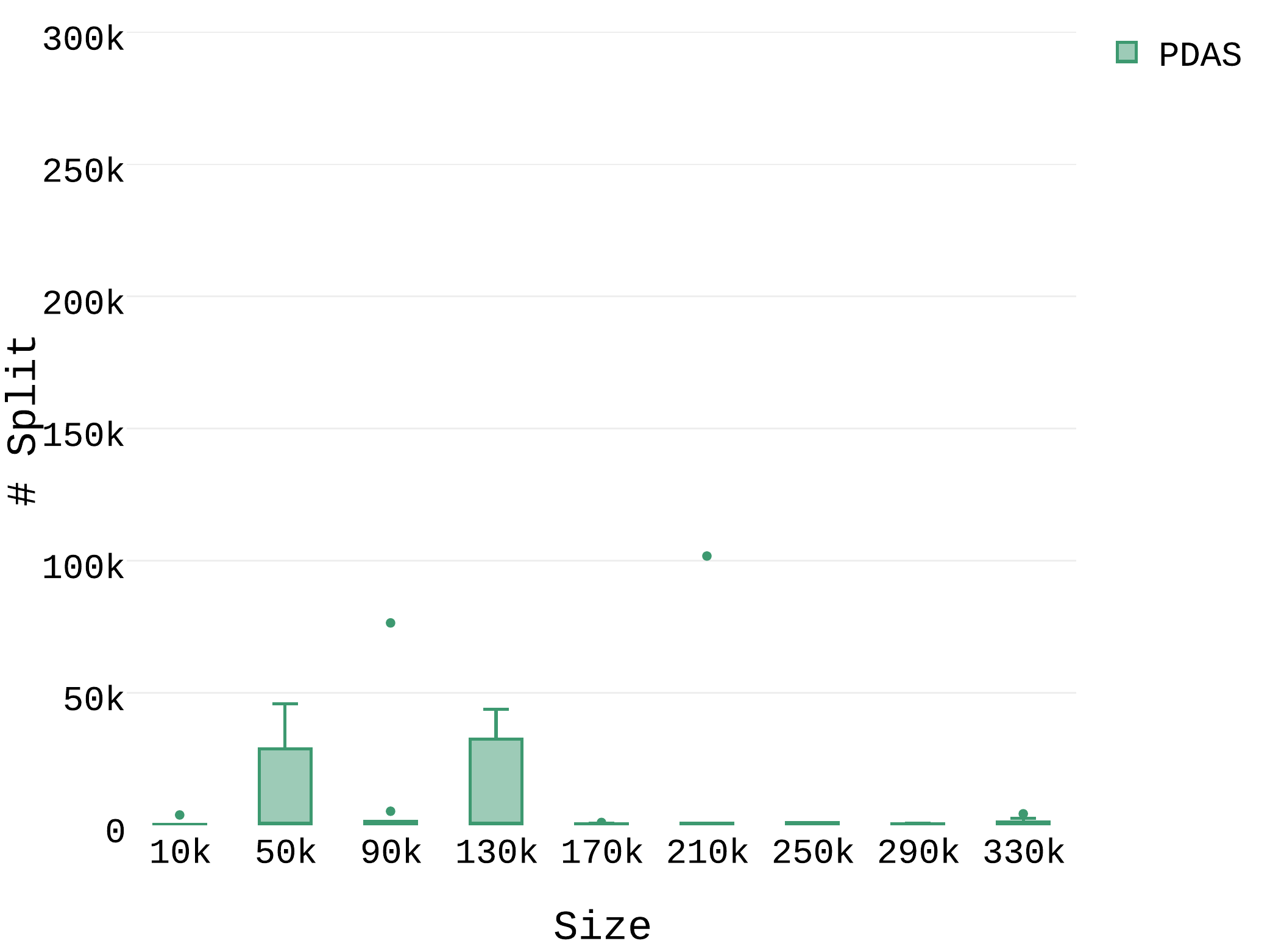}
\end{center}
\caption{Summary of warm-started PDAS in terms of \# of splits.}
  \label{fig:warm2}
\end{figure}

Since \verb|PAV| is not able to utilize a good initial partition, the required work (i.e., number of merge operations) for solving the perturbed problems is not cheaper than for the base instance.  In contrast, warm-starting the \verb|PDAS| algorithm can greatly reduce the computational cost as observed in the much-reduced number of merge operations (even after accounting for the added split operations).



\subsection{Test on Trend Filtering}

We now compare the performance of several PDAS variants for trend filtering.  In particular, we compare the straightforward PDAS method of Algorithm~\ref{alg.pdas.frame} (\verb|PDAS|), Algorithm~\ref{alg.pdas.murty} (\verb|SF1|), and the PDAS method with our proposed safeguard strategy in Algorithm~\ref{alg.pdas.safeguard} (\verb|SF2|).  The safeguard parameter ${\tt t_{max}} = 5$ was chosen for \verb|SF1| and we similarly set $m=5$, $\delta_s = 0.9$, and $\delta_e = 1.1$ for \verb|SF2|.  We generated 10 random problem instances each for $n\in\{10^4,1.7\times 10^5, 3.3\times 10^5\}$ where, for each instance, the data vector had $y_i$ uniformly distributed in $[0,10]$.  Such datasets had minimum pattern and thus made each problem relatively difficult to solve.  We considered both regularization functions $g_1$ and $g_{1+}$ defined in \S\ref{subsec:tf.reg} with difference matrices $D^{(1,n)}$ and $D^{(2,n)}$, setting $\lambda = 10$ in all cases.  For all runs, we set an iteration limit of 800; if an algorithm failed to produce the optimal solution within this limit, then the run was considered a failure.  The percentages of successful runs for each algorithm is reported in Table~\ref{tab.percent.success}.
\begin{table}[ht]\scriptsize
  \caption{Percentages of successful runs for each algorithm and problem type}
  \label{tab.percent.success}
  \centering
  {\tt
  \begin{tabular}{|r|rrr|rrr|rrr|rrr|}
    \hline
    \multirow{2}{*}{$n$ (size)}&\multicolumn{12}{|c|}{\% of success}\\
    \cline{2-13}
     & \multicolumn{3}{|c|}{$g(\theta) = \|(D^{(1,n)}\theta)_+\|_1$} & \multicolumn{3}{|c|}{$g(\theta) = \|D^{(1,n)}\theta\|_1$} & \multicolumn{3}{|c|}{$g(\theta) = \|(D^{(2,n)}\theta)_+\|_1$}  & \multicolumn{3}{c|}{$g(\theta) = \|D^{(2,n)}\theta\|_1$}\\
     & PDAS & SF1 & SF2& PDAS & SF1 & SF2& PDAS & SF1 & SF2& PDAS & SF1 & SF2\\
    \hline

1.0e+4  & 1.0 & 1.0 & 1.0 & 1.0 & 1.0 & 1.0 & 0.0 & 1.0 & 1.0 & 0.0 & 1.0 & 1.0 \\
1.7e+5  & 1.0 & 1.0 & 1.0 & 1.0 & 1.0 & 1.0 & 0.0 & 0.0 & 1.0 & 0.0 & 0.0 & 1.0 \\
3.3e+5  & 1.0 & 1.0 & 1.0 & 1.0 & 1.0 & 1.0 & 0.0 & 0.0 & 1.0 & 0.0 & 0.0 & 1.0 \\
    \hline
  \end{tabular}
  }
\end{table}

We observe from Table~\ref{tab.percent.success} that all algorithms solved all instances when the regularization function involved a first-order difference matrix, but that \verb|PDAS| and \verb|SF1| both had failures when a second-order difference matrix is used.  By contrast, our proposed safeguard in \verb|SF2| results in a method that is able to solve all instances within the iteration limit.  This shows that our proposed safeguard, which allows more aggressive updates, can be more effective than a conservative safeguard.

To compare further the performance of the algorithms, we collected the running time and iteration number for all successfully solved instances. Figure~\ref{fig:tf1.cold} demonstrates that when $D = D^{(1,n)}$, all algorithms show very similar performance.  However, when $D = D^{(2,n)}$ as in Figure~\ref{fig:tf2.cold}, the results show that \verb|SF2| is not only more reliable than \verb|PDAS| and \verb|SF1|; it is also more efficient even when \verb|SF1| is successful.  We also include the results for the interior-point method (\verb|IPM|) proposed in \cite{kim09}, but emphasize that this algorithm is implemented in Matlab (as opposed to Python) and is only set up to solve the instances when an $\ell_1$-regularization function is used. Although the interior point method demonstrates impressive performance, we remark that in general it is difficult to warm-starting interior point methods \cite{john2008implementation}, despite recent efforts toward this direction \cite{yildirim2002warm,gondzio1996computational,gondzio2002reoptimization} .

\begin{figure}[ht]
\centering
\subfigure[$D= D^{(1,n)}$, $g = g_{1+}$]{
\includegraphics[width = 2.1in]{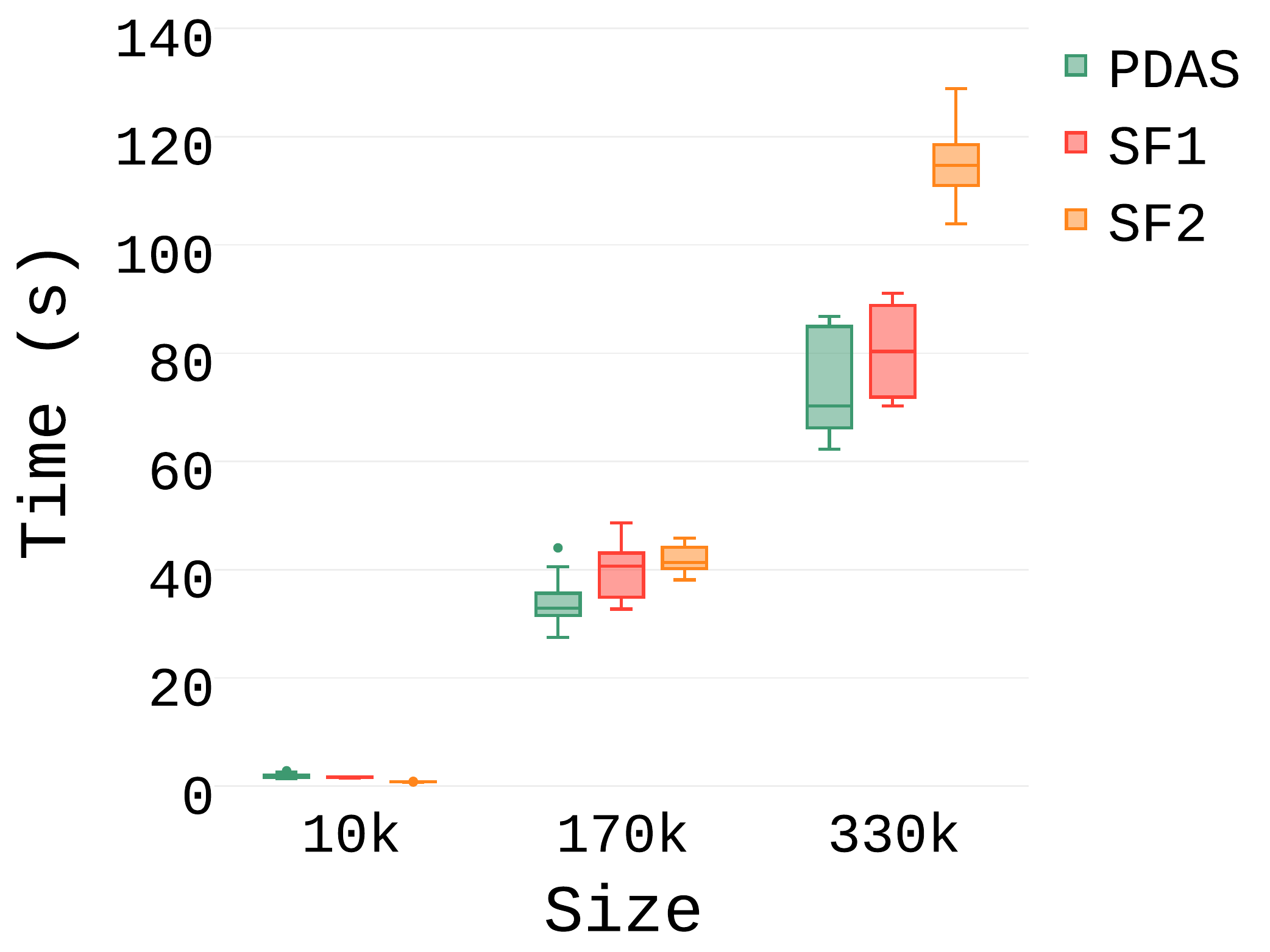}
\includegraphics[width = 2.1in]{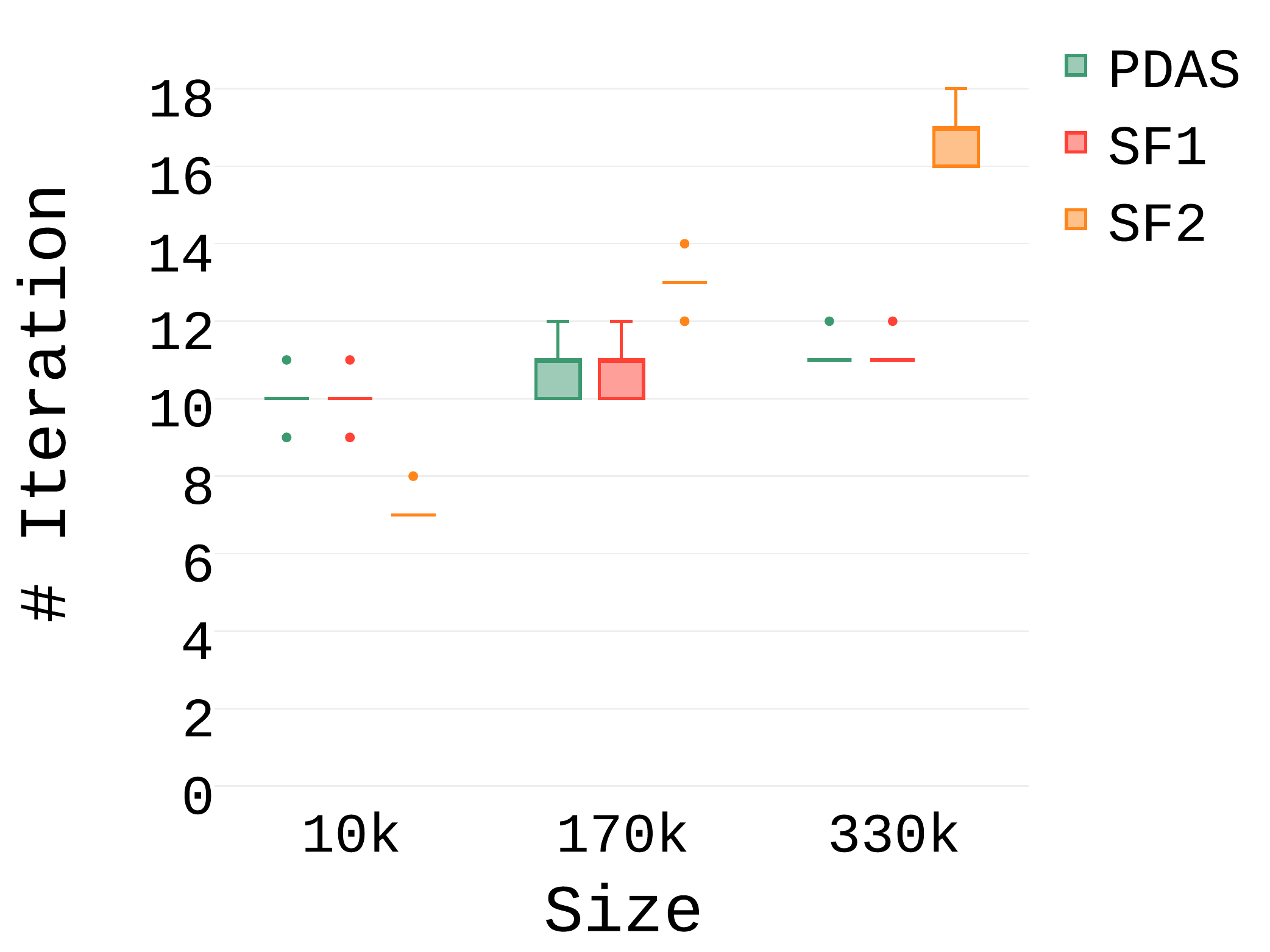}
 }
\subfigure[$D= D^{(1,n)}$, $g = g_1$]{
\includegraphics[width = 2.1in]{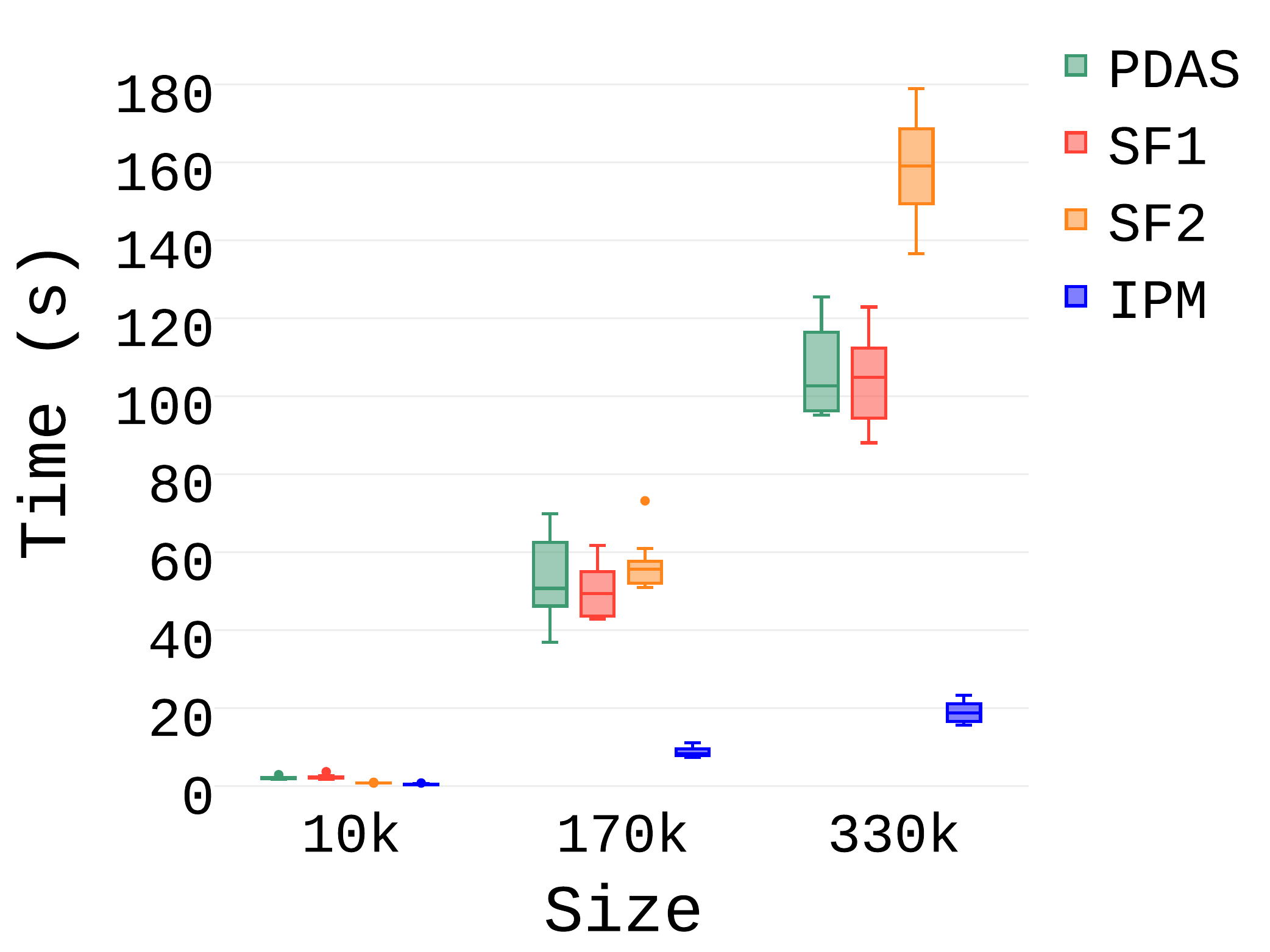}
\includegraphics[width = 2.1in]{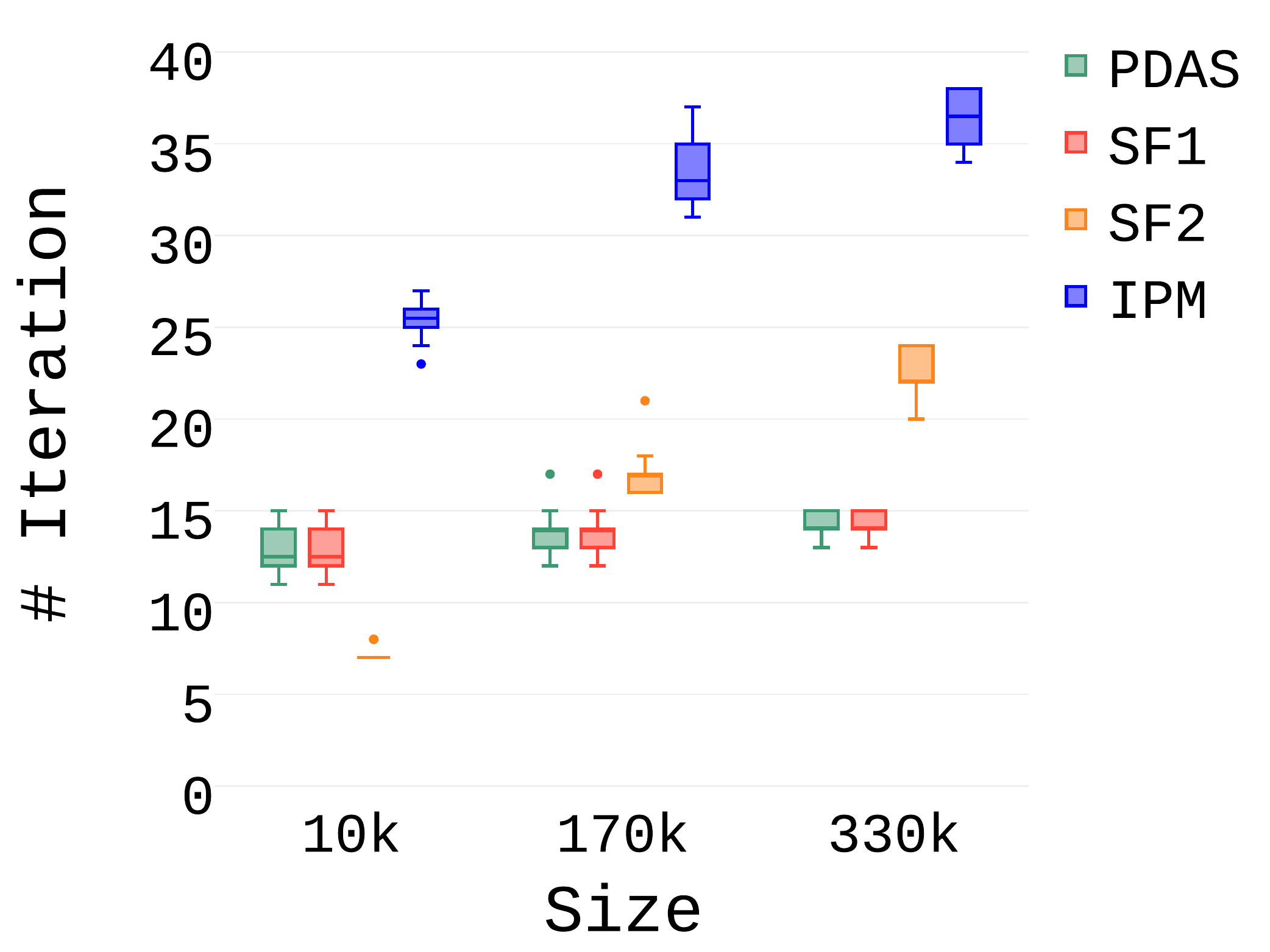}
}
\caption{PDAS vs IPM for $D = D^{1,n}$ and different choices of $g$.}
  \label{fig:tf1.cold}
\end{figure}

\begin{figure}[ht]
\centering
\subfigure[$D= D^{(2,n)}$, $g = g_{1+}$]{
\includegraphics[width = 2.1in]{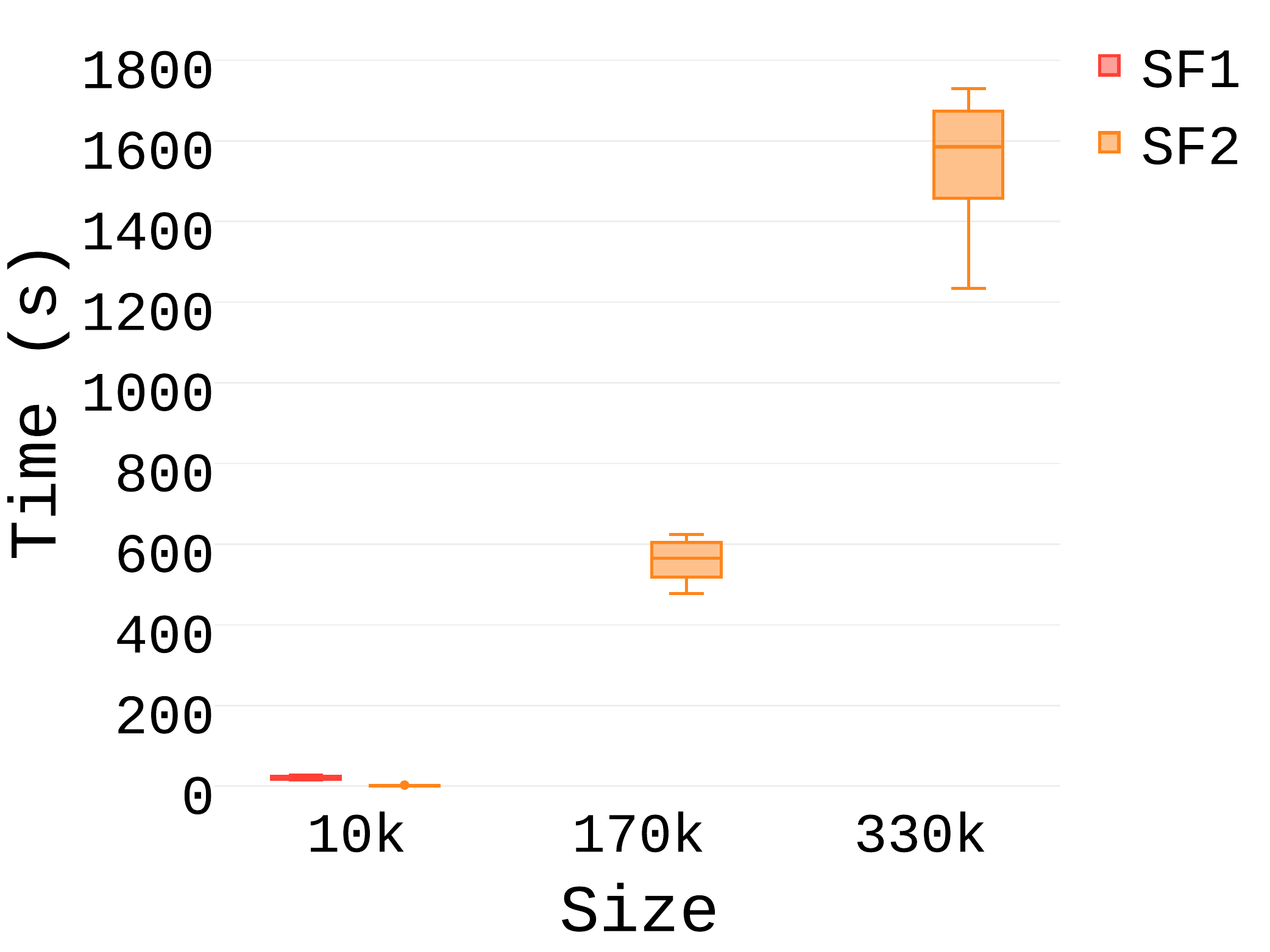}
\includegraphics[width = 2.1in]{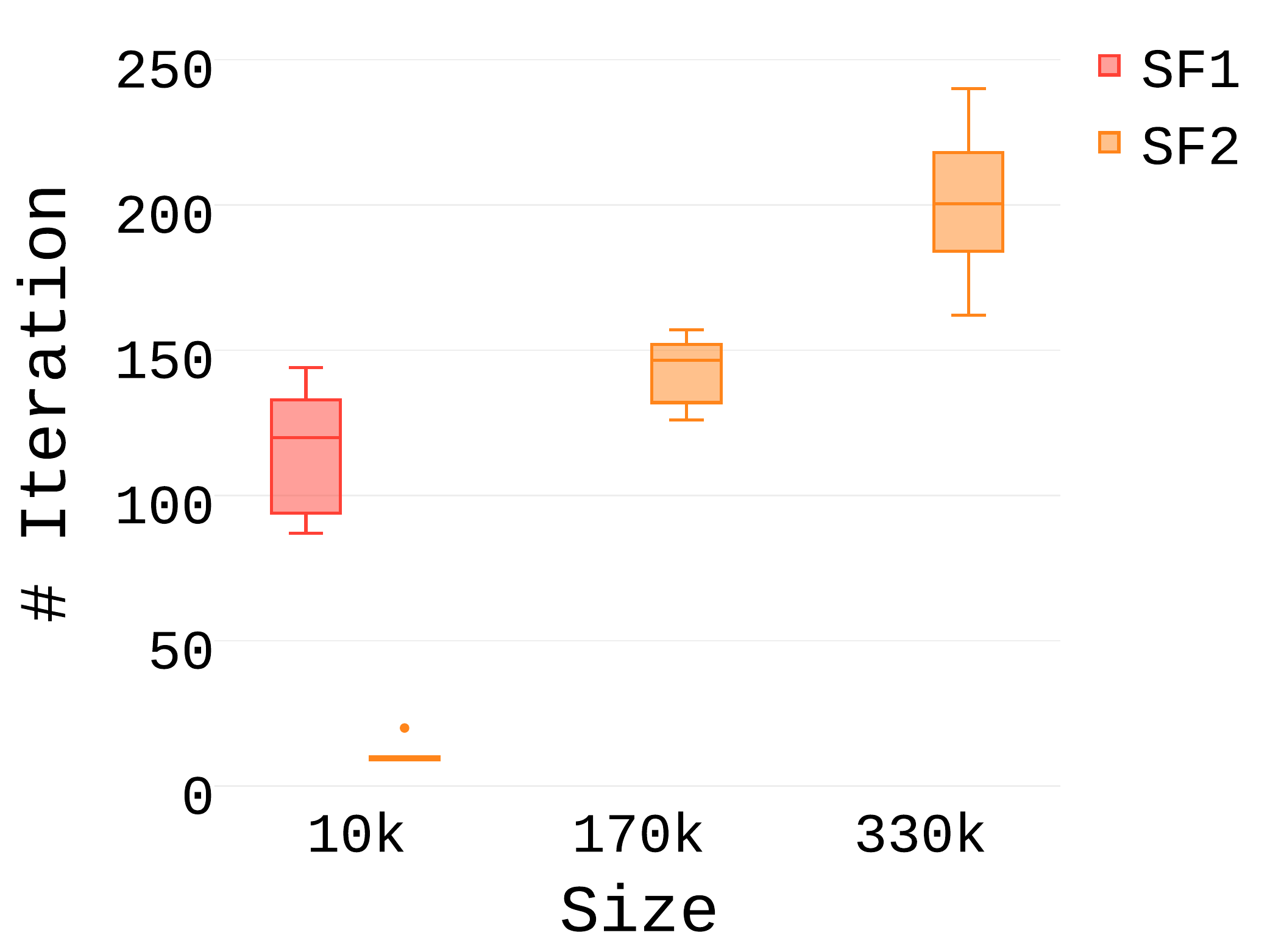}
}
\subfigure[$D= D^{(2,n)}$, $g = g_1$]{
\includegraphics[width = 2.1in]{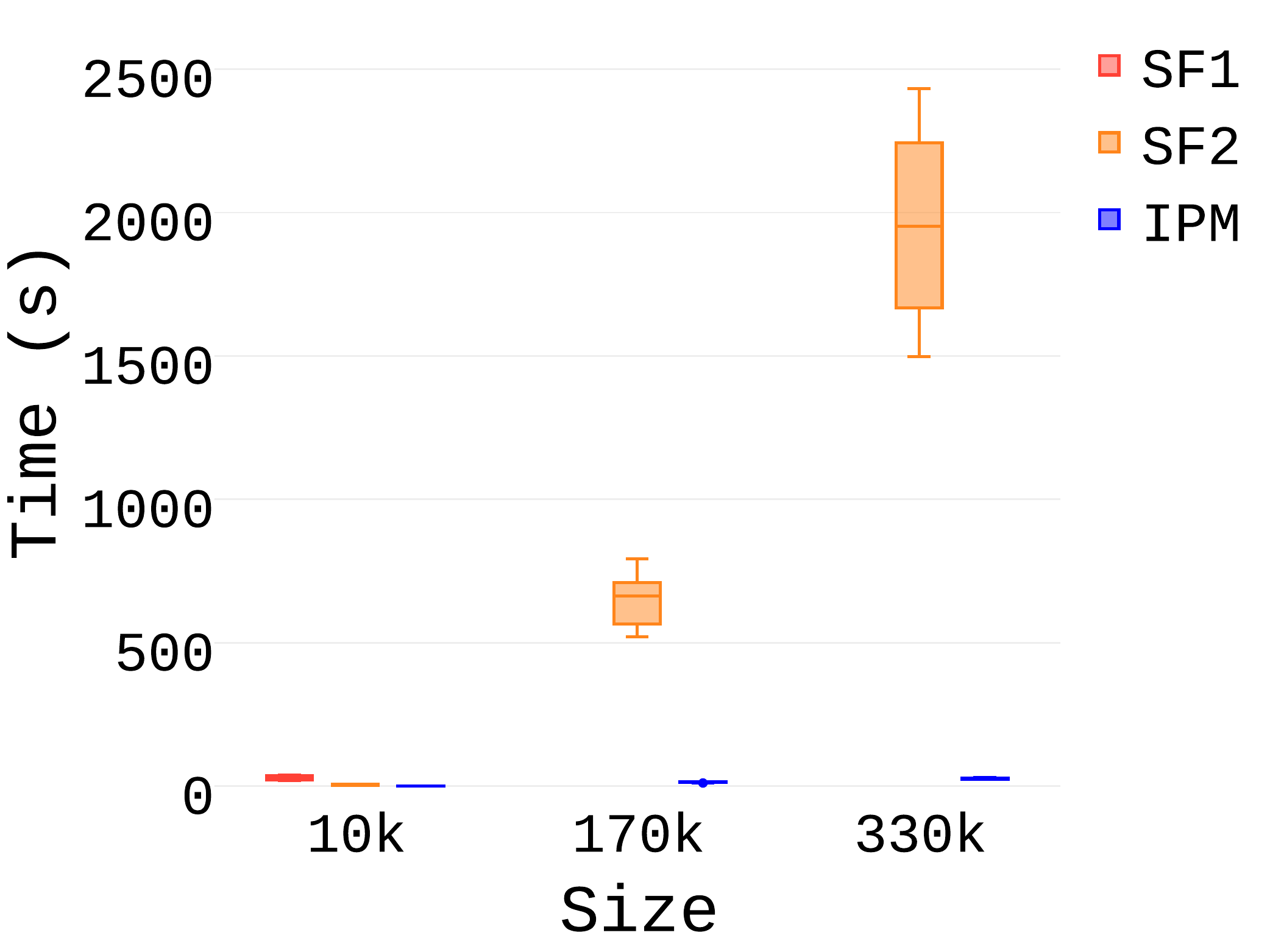}
\includegraphics[width = 2.1in]{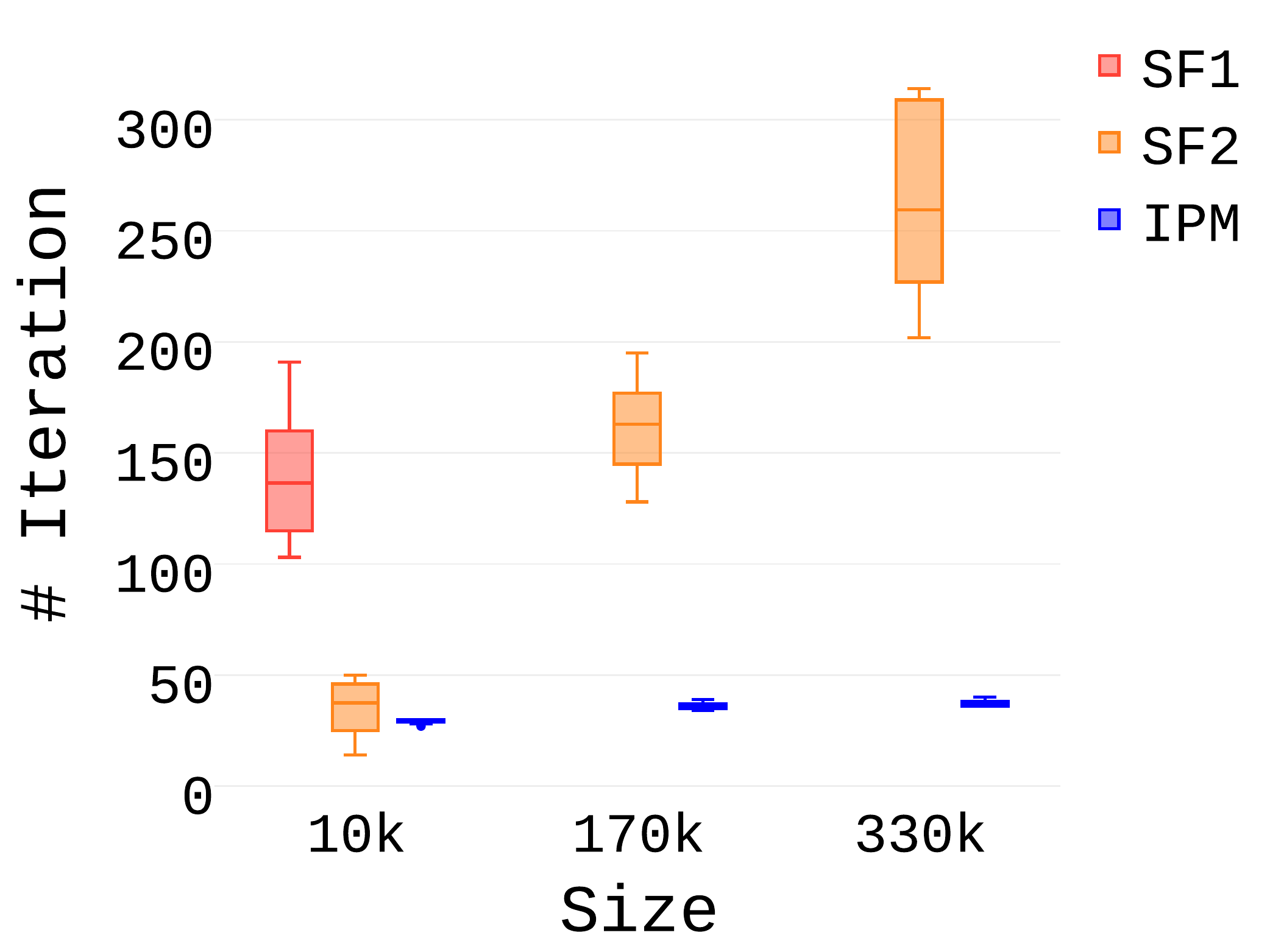}
}
\caption{PDAS vs IPM for $D = D^{2,n}$ and different choices of $g$.}
  \label{fig:tf2.cold}
\end{figure}


\subsubsection*{Warm-starting}  We conclude our experiments by comparing the performance of \verb|IPM|---which is not set up for warm-starting---and warm-started \verb|SF2|.  As in \S\ref{sec:numeric.isoreg}, we generated 10 perturbed instances for a given dataset by adding a random variable $\epsilon_i \sim\mathcal N (0,10^{-2})$ to $y_i$.  The running times and numbers of iterations for solving the perturbed problems are reported via boxplots in Figure~\ref{fig:tf.warm}.

\begin{figure}[ht]
\centering
\subfigure[$D = D^{(1,n)}$, $g = g_1$]{
\includegraphics[width = 2.2in]{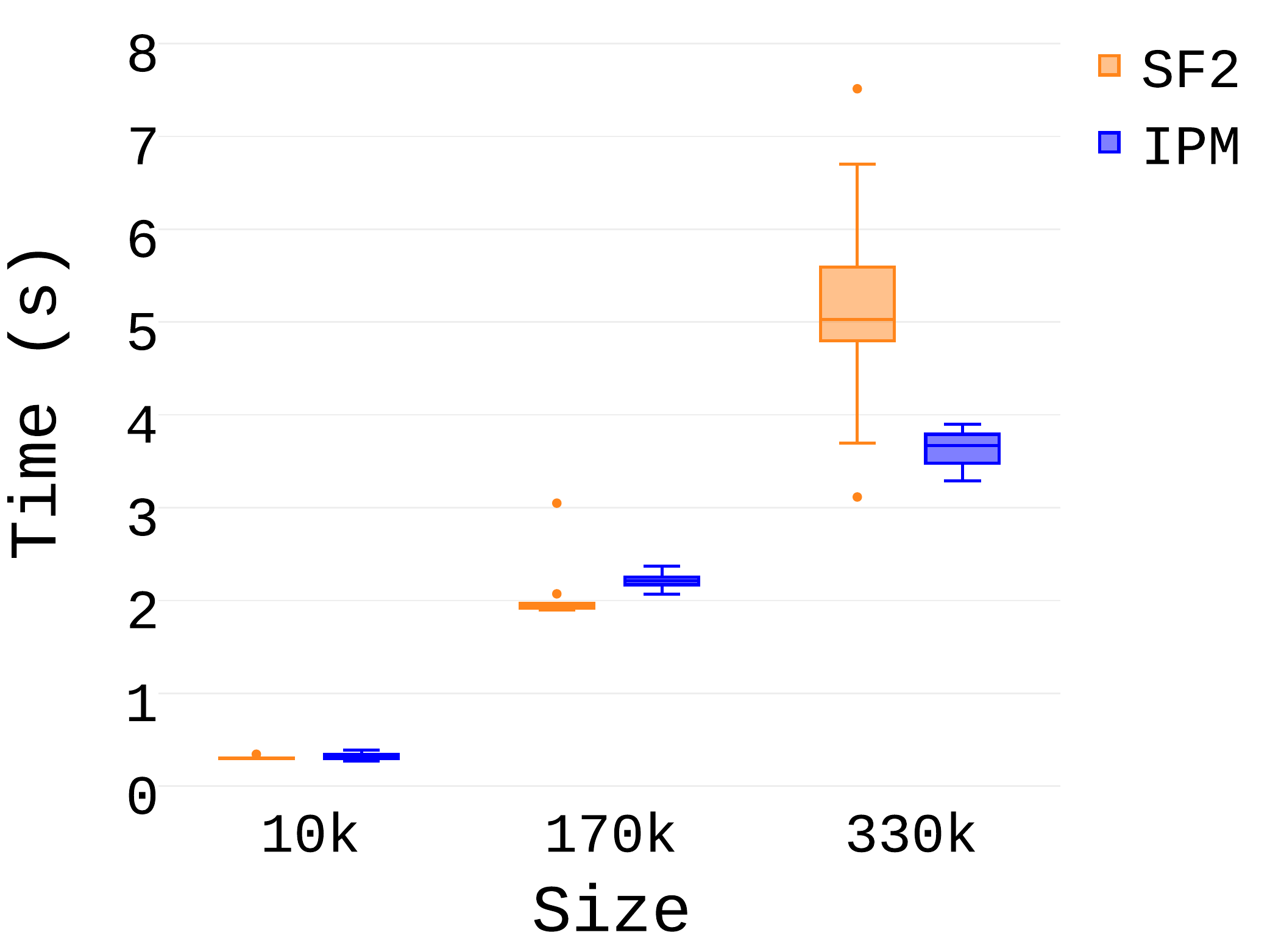}
\includegraphics[width = 2.2in]{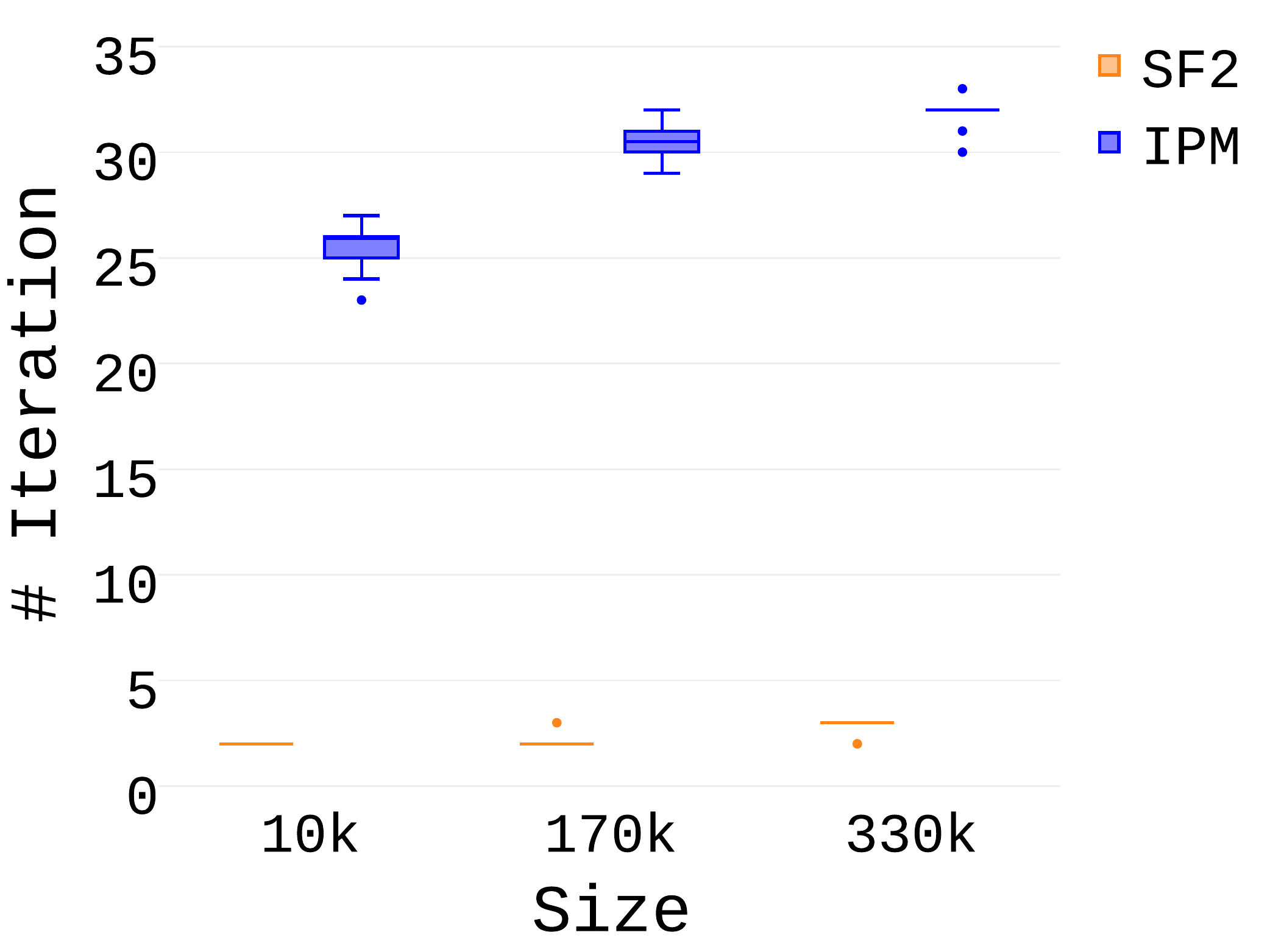}
}
\subfigure[$D = D^{(2,n)}$, $g=g_1$]{
\includegraphics[width = 2.2in]{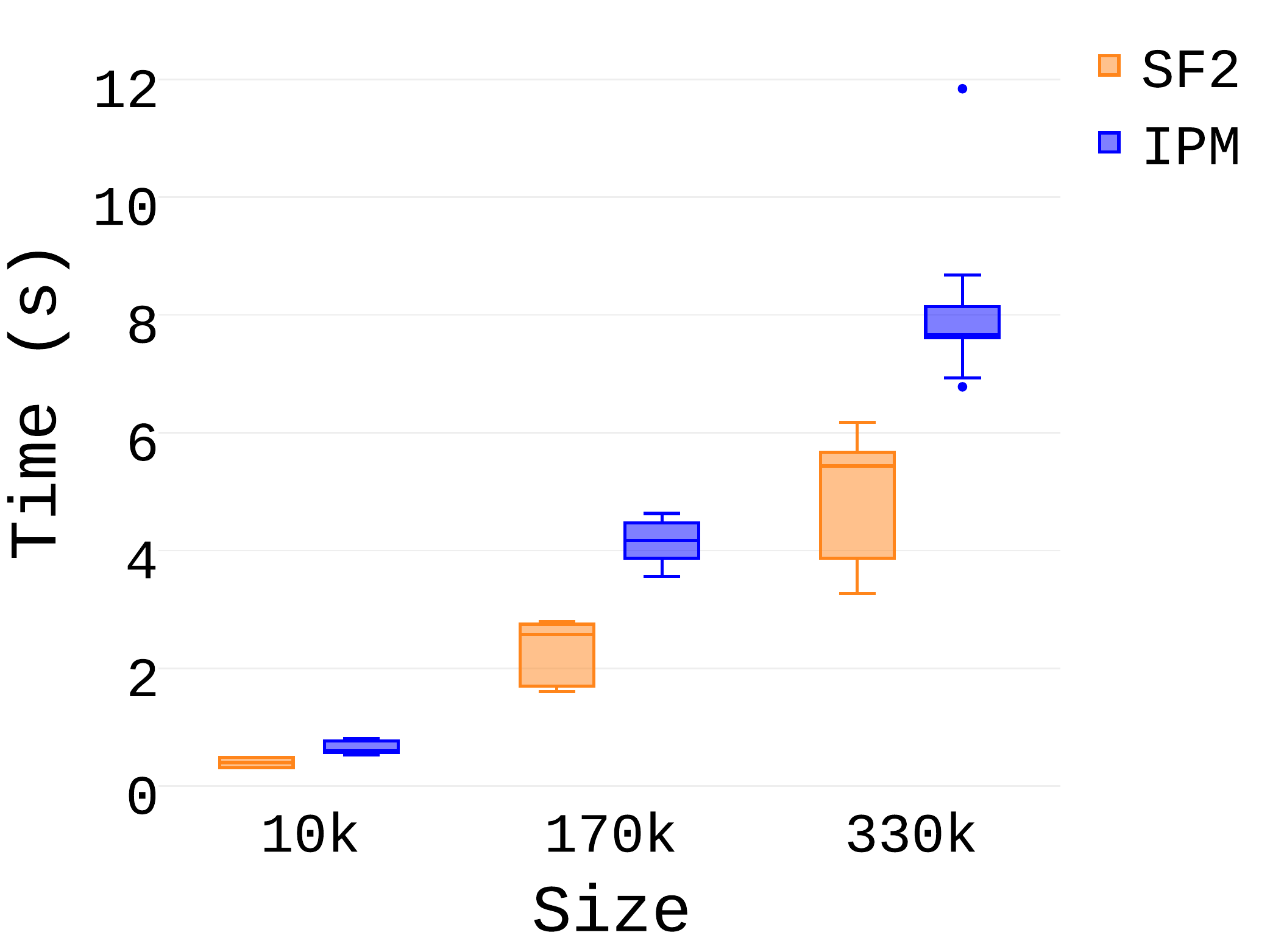}
\includegraphics[width = 2.2in]{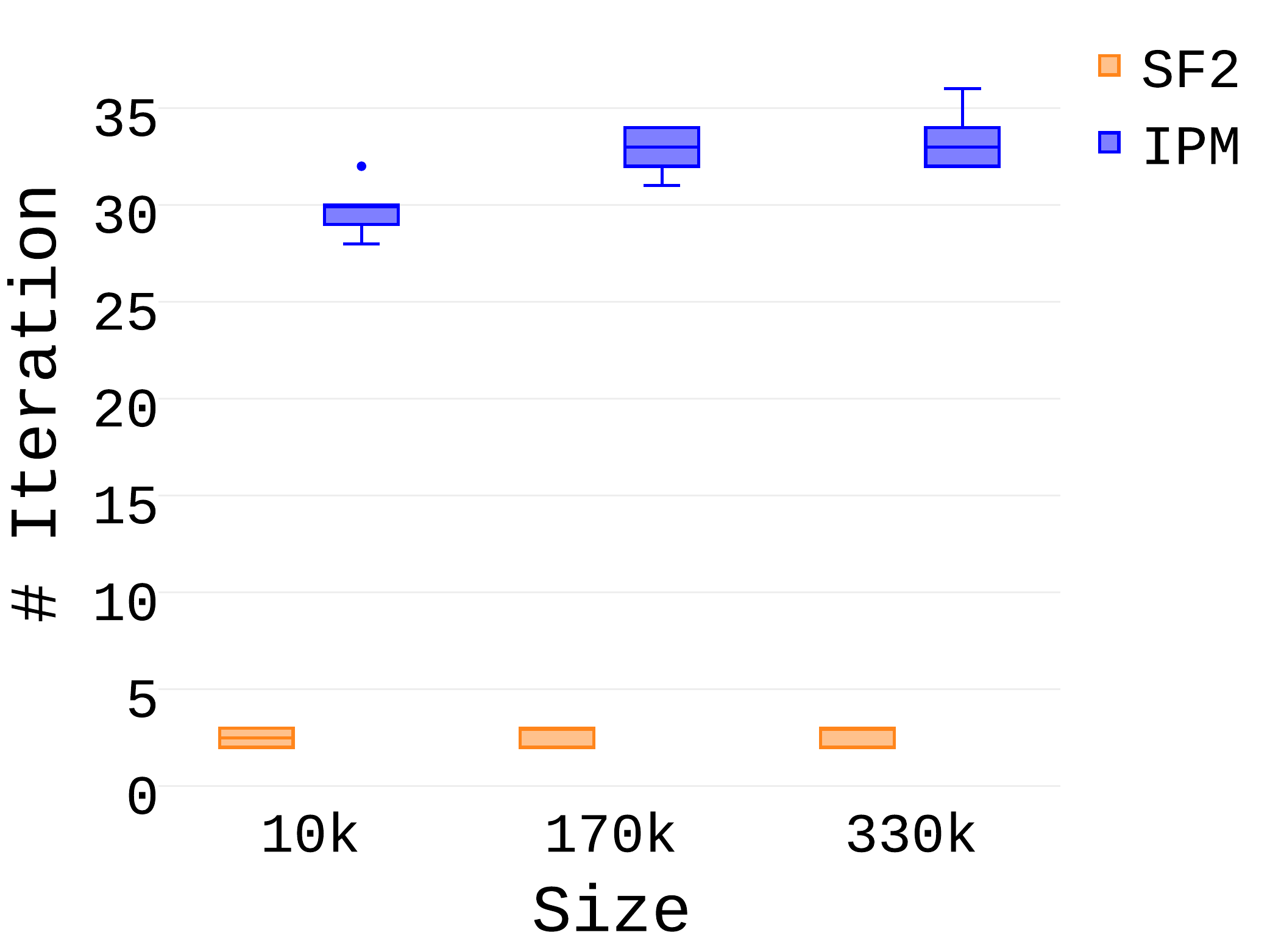}
}
\caption{PDAS (with warm-start) vs IPM.}
  \label{fig:tf.warm}
\end{figure}

Comparing the performance of \verb|SF2| between Figures~\ref{fig:tf1.cold}, \ref{fig:tf2.cold}, and \ref{fig:tf.warm} shows that warm-starting \verb|SF2| can dramatically reduce the cost of solving an instance of \eqref{prob.general}.  With a cold-start, \verb|SF2| may require hundreds of iterations, while with warm-starting it requires dramatically fewer iterations.  In contrast, \verb|IPM| does not benefit much from a good starting point.

\section{Concluding Remarks}\label{sec:conclusion}

We propose innovative PDAS algorithms for Isotonic Regression (IR) and Trend Filtering (TF).  For IR, our PDAS method enjoys the same theoretical properties as the well-known PAV method, but also has the ability to be warm-started, can exploit parallelism, and outperforms PAV in our experiments.  Our proposed safeguarding strategy for a PDAS method for TF also exhibits reliable and efficient behavior.  Overall, our proposed methods show that PDAS frameworks are powerful when solving a broad class of regularization problems.

\bibliographystyle{unsrt}
\bibliography{references}

\end{document}